\def\cD{\mathcal{D}}
\def\C{\mathcal{C}}
\def\D{\mathbb{D}}
\def\Diff{\operatorname{Diff}}
\def\Id{\operatorname{Id}}
\def\Ker{\operatorname{Ker}}
\def\Sign{\operatorname{\textbf{Sign}}}
\def\a{\alpha}
\def\b{\beta}
\def\area{\operatorname{area}}
\def\diam{\operatorname{diam}}
\def\ev{\operatorname{ev}}
\def\sign{\operatorname{sign}}
\def\g{\gamma}
\def\s{\sigma}
\def\vol{\operatorname{vol}}
\newtheorem{thm}{Theorem}[section]
\newtheorem{thm*}{Theorem}
\newtheorem{lem}[thm]{Lemma}
\newtheorem*{q*}{Question}
\theoremstyle{definition}
\newtheorem{rem}[thm]{Remark}
\newtheorem*{rem*}{Remark}
\newtheorem*{cor*}{Corollary}
\def\OP{\operatorname}
\def\B{\mathbf}
\begin{document}

\title[Quasi-morphisms and $L^p$-metrics]{Quasi-morphisms and $L^p$-metrics on groups of volume-preserving diffeomorphisms}

\author{Michael Brandenbursky}

\begin{abstract}
Let $M$ be a smooth compact connected oriented manifold of dimension at least two endowed with a volume form $\mu$. We show that every homogeneous quasi-morphism on the identity component $\Diff_0(M,\mu)$ of the group of volume-preserving diffeomorphisms of $M$, which is induced by a quasi-morphism on the fundamental group $\pi_1(M)$, is Lipschitz with respect to the $L^p$-metric on $\Diff_0(M,\mu)$. As a consequence, assuming certain conditions on $\pi_1(M)$, we construct bi-Lipschitz embeddings of finite dimensional vector spaces into $\Diff_0(M,\mu)$.
\end{abstract}

\maketitle

\section{Introduction and main results}

\subsection{The $L^p$-metric}

Let $M$ be a compact connected and oriented Riemannian manifold and
let $\Diff(M,\mu)$ denote the group of smooth diffeomorphisms of
$M$ acting by the identity on a neighborhood of the boundary and
preserving the volume form $\mu$ induced by the metric.
Unless otherwise stated we assume that $\Diff(M,\mu)$ is
equipped with the Whitney $C^{\infty}$-topology.

In the present paper we study the geometry of the identity component
$\Diff_0(M,\mu)$ of the above group endowed with the right
invariant $L^p$-metric. It is defined as follows. Let
$$
\mathcal{L}_p\{g_t\}:=
\int_0^1 dt \left(\int_M|\dot{g_t}(x)|^p\mu \right)^{\frac 1p}
$$
be the $L^p$-length
of a smooth isotopy $\{g_t\}_{t\in [0,1]}\subset\Diff_0(M,\mu)$,
where $|\dot{g_t}(x)|$ denotes the length of the tangent
vector $\dot{g_t}(x)\in T_xM$ induced by the Riemannian
metric. Observe that this length is right-invariant, that is,
$\mathcal{L}_p\{g_t\circ f\}=\mathcal{L}_p\{g_t\}$
for any $f\in \Diff(M,\mu)$. It defines a non-degenerate right-invariant
metric on $\Diff_0(M,\mu)$ by
$$
{\bf d}_p(g_0,g_1):=\inf_{g_t}\mathcal{L}_{p}\{g_t\},
$$
where the infimum is taken over all paths from
$g_0$ to $g_1$. See Arnol'd-Khesin \cite{AK} and Khesin-Wendt \cite[Section~3.6]{KW} for a detailed discussion.

If $p=2$ then the group $\Diff_0(M,\mu)$ is in fact equipped with a
Riemannian metric inducing the above $L^2$-length.  The geodesics of
this metric are the solutions of the equations of the flow of an
incompressible fluid \cite{Ar}, which makes the $p=2$ case the most
interesting. It is known that if $M$ is a simply connected Riemannian
manifold of dimension at least three then the $L^2$-diameter of the
group $\Diff_0(M,\mu)$ is finite \cite{Sh}. On the other hand
Eliashberg and Ratiu \cite{ER} proved that this diameter is infinite
for surfaces and for manifolds with positive first Betti number, and whose fundamental group has a trivial center. In \cite{BK} Kedra and the author showed, that under certain conditions on the fundamental group, the diameter of the identity component of the group of volume-preserving diffeomorphisms is also infinite.

\subsection{Quasi-morphisms on $\Diff_0(M,\mu)$}

Quasi-morphisms are known to be a helpful tool in the
study of algebraic structure of non-Abelian groups, especially the
ones that admit a few or no (linearly independent) real-valued homomorphisms.
Recall that a {\it quasi-morphism} on a group $G$ is a function
$\varphi\colon G \to \B R$ which satisfies the homomorphism equation up to
a bounded error: there exists $K_\varphi > 0$ such that
$$|\varphi (ab) -\varphi(a) -\varphi (b)| \leq K_\varphi$$ for all $a,b
\in G$. The infimum of all such $K_\varphi$ is called the defect of $\varphi$ and is denoted by $D_\varphi$.
A quasi-morphism $\varphi$ is called {\it homogeneous} if we have
$\varphi (a^m) = m \varphi (a)$ for all $a \in G$ and $m \in \B Z$.
Any quasi-morphism $\varphi$ can be {\it homogenized}: setting
\begin{equation}\label{eq:definition-homogenization}
\widetilde{\varphi} (a) := \lim_{k\to +\infty} \varphi (a^k)/k
\end{equation}
we get a homogeneous (possibly trivial) quasi-morphism
$\widetilde{\varphi}$.

\subsubsection{Polterovich construction}

Let $m\in M\setminus\partial M$. Suppose that the center $Z(\pi_1(M,m))$ is trivial and the group $\pi_1(M,m)$ admits a \emph{non-trivial} homogeneous quasi-morphism $\widetilde{\phi}\colon\pi_1(M,m)\to\B R$. For each $x\in M\setminus\partial M$ let us choose an arbitrary geodesic path from $x$ to $m$. In \cite{Polt} Polterovich constructed the induced \emph{non-trivial} homogeneous quasi-morphism $\widetilde{\Phi}$ on $\Diff_0(M,\mu)$ as follows:\\
For each $x\in M$ and an isotopy $\{g_t\}_{t\in[0,1]}$ between $\Id$ and $g$ let $g_x$ be a closed loop in $M$ which is a concatenation of a geodesic
path from $m$ to $x$, the path $g_t(x)$ and a described above geodesic path from $g(x)$ to $m$. Denote by $[g_x]$ the corresponding element in $\pi_1(M,m)$ and set
$$\Phi(g):=\int\limits_M \widetilde{\phi}([g_x])\mu\qquad\qquad \widetilde{\Phi}(g):=\lim\limits_{k\to\infty}\frac{1}{k}\int\limits_M \widetilde{\phi}([(g^k)_x])\mu.$$
The maps $\Phi$ and $\widetilde{\Phi}$ are well-defined quasi-morphisms because the center $Z(\pi_1(M,m))$ is trivial and every diffeomorphism in $\Diff_0(M,\mu)$ is volume-preserving. In addition, the quasi-morphism $\widetilde{\Phi}$ neither depends on the choice of a family of geodesic paths, nor on the choice of a base point $m$. For more details see \cite{Polt}.

\subsubsection{Gambaudo-Ghys construction}

Let $\cD:=\Diff(\D^2,\partial\D^2,\area)$ be the group of smooth area-preserving diffeomorphisms of the unit disc in the Euclidean plane which equal to the identity near the boundary. The group $\cD$ admits a unique
(continuous, in the proper sense) homomorphism to the reals--the famous Calabi homomorphism (see e.g. \cite{Ba,C}). At the same time $\cD$ is known to admit many (linearly independent) homogeneous quasi-morphisms
(see e.g. \cite{Ba-G,BEP,GG}). In what follows we describe a particular geometric construction of such quasi-morphisms, essentially contained in Gambaudo-Ghys \cite{GG} and studied by the author in \cite{B}, which produces quasi-morphisms on $\cD$ from
quasi-morphisms on the pure braid groups $\B P_n$ on $n$ strings.

Denote by $X_n$ the space of all \emph{ordered} $n$-tuples of distinct points in $\D^2$. Let us fix a base point $\overline{z}=(z_1,\ldots,z_n)\in X_n$ and let $\overline{x}=(x_1,\ldots, x_n)$ be any other point in $X_n$. Take  $g\in\cD$ and  any path $g_t$, $0 \leq t\leq 1$, in $\cD$
between $\Id$ and $g$. Connect $\overline{z}$ to $\overline{x}$ by a straight line in $(\D^2)^n$, then act on $\overline{x}$ with the path $g_t$, and then connect $g(\overline{x})$ to $\overline{z}$ by the straight line in $(\D^2)^n$. We get a loop in $(\D^2)^n$. More specifically it looks as follows. Connect $z_i$ to $x_i$ by straight lines $\mathfrak{l}_{1,i}\colon\left[0,\frac{1}{3}\right]\to\D^2$
in the disc, then act with the path $g_{3t-1}$, $\frac{1}{3} \leq t\leq \frac{2}{3}$, on each $x_i$, and then
connect  $g(x_i)$ to $z_i$ by straight lines $\mathfrak{l}_{2,i}\colon\left[\frac{2}{3},1\right]\to\D^2$
in the disc, for all $1\leq i \leq n$. It is easy to show that for almost all $n$-tuples of different points $x_{1},\ldots, x_{n}$ in the disc the concatenations of the paths $\mathfrak{l}_{1,i}\colon\left[0,\frac{1}{3}\right]\to\D^{2}$, $g_{3t-1}\colon\left[\frac{1}{3},\frac{2}{3}\right]\to\D^{2}$ and $\mathfrak{l}_{2,i}\colon\left[\frac{2}{3},1\right]\to\D^{2}$, $i=1,\ldots,n$, yield a loop in $X_n$. The homotopy type of this loop is an element in $\B P_n$ (here $\B P_n$ is identified with the fundamental group $\pi_1 (X_n,\overline{z})$). This element is independent of the choice of $g_t$ because $\cD$ is contractible (see e.g. \cite{FLP,T}), it will be denoted by $\g(g;\overline{x})$. Let $\widetilde{\varphi}_n$ be a homogeneous quasi-morphism on $\B P_n$. Denote $d\overline{x}:=dx_1\cdot\ldots\cdot dx_n$ and set
\begin{equation}
\Phi_n (g):=\int\limits_{X_n}\widetilde{\varphi}_n(\g(g;\overline{x}))d\overline{x} \qquad\qquad \widetilde{\Phi}_n (g):=\lim_{k\to +\infty}\Phi_n (g^k)/k
\end{equation}
The function $\Phi_n$ is a well-defined quasi-morphism on $\cD$ and the function $\widetilde{\Phi}_n$ is a well-defined \emph{homogeneous} quasi-morphism on $\cD$, see \cite{B}.
\begin{rem*}
The group $\B P_2$ is infinite cyclic, hence every homogeneous quasi-morphism $\widetilde{\varphi}_2\colon\B P_2\to\B R$ is a homomorphism. The celebrated theorem of Banyaga \cite{Ba} states that the kernel of the Calabi homomorphism $\C\colon\cD\to\B R$ is a simple group. It follows that $\widetilde{\Phi}_2(g)=K_{\widetilde{\Phi}_2}\cdot\C(g)$ for every $g\in\cD$, where $K_{\widetilde{\Phi}_2}$ is a real constant independent of $g$.
\end{rem*}

\subsection{Main results}

A map $\psi\colon (X_1,\OP{d}_1)\to (X_2,\OP{d}_2)$ between metric spaces
is called {\em Lipschitz} if there exists a constant
$A\geq 0$ such that
$$
\OP{d}_2(\psi(x),\psi(y))\leq A\cdot \OP{d}_1(x,y).
$$
The following theorems are our main technical results. They are proven in Section \ref{S:proofs}.

\begin{thm*}\label{T:lsl-M}
Let $M$ be a compact connected and oriented Riemannian manifold of dimension at least two, such that $Z(\pi_1(M,m))$ is trivial. Let $\widetilde{\phi}\colon\pi_1(M,m)\to\B R$ be a homogeneous quasi-morphism. Then the induced homogeneous quasi-morphism
$$
\widetilde{\Phi}\colon \Diff_0(M,\mu) \to \B R
$$
is Lipschitz with respect to the $L^p$-metric on the group $\Diff_0(M,\mu)$.
\end{thm*}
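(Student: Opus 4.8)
The plan is to estimate $\widetilde\Phi$ directly along an isotopy and integrate. Since $\widetilde\Phi$ is homogeneous and a quasi-morphism, it suffices to bound $|\widetilde\Phi(g)|$ by a constant times $\mathbf{d}_p(\Id,g)$; indeed, right-invariance of $\mathbf{d}_p$ together with the quasi-morphism property then yields $|\widetilde\Phi(g_1)-\widetilde\Phi(g_0)|\le |\widetilde\Phi(g_1 g_0^{-1})|+D_{\widetilde\Phi}$, and one absorbs the additive constant by the usual trick of passing to powers (replace $g$ by $g^k$, use $\widetilde\Phi(g^k)=k\widetilde\Phi(g)$ and $\mathbf{d}_p(\Id,g^k)\le k\,\mathbf{d}_p(\Id,g)$, divide by $k$, let $k\to\infty$). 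So the whole problem reduces to: for every smooth isotopy $\{g_t\}_{t\in[0,1]}$ from $\Id$ to $g$,
$$
|\widetilde\Phi(g)|\le A\cdot\mathcal L_p\{g_t\}.
$$

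First I would recall Polterovich's pointwise estimate. For $x\in M$ the loop $g_x$ is the concatenation of a geodesic $m\to x$, the trajectory $t\mapsto g_t(x)$, and a geodesic $g(x)\to m$. A homogeneous quasi-morphism $\widetilde\phi$ on $\pi_1(M,m)$ is in particular Lipschitz with respect to word metric, and via a coarsely-Lipschitz comparison between the word metric and the Riemannian length of a minimal-length loop in a fixed homotopy class, one gets a constant $C=C(M,\widetilde\phi)$ with
$$
|\widetilde\phi([g_x])|\le C\cdot\bigl(\operatorname{length}(t\mapsto g_t(x))+2\operatorname{diam}(M)+1\bigr)\le C'\cdot\Bigl(\int_0^1|\dot g_t(x)|\,dt+1\Bigr).
$$
Then $|\Phi(g)|\le\int_M|\widetilde\phi([g_x])|\,\mu\le C'\int_M\!\!\int_0^1|\dot g_t(x)|\,dt\,\mu + C'\vol(M)$. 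Switching the order of integration and applying Hölder's inequality on $M$ (with $\mu$ normalized, or paying a $\vol(M)^{1-1/p}$ factor), $\int_M|\dot g_t(x)|\,\mu\le \vol(M)^{1-1/p}\bigl(\int_M|\dot g_t(x)|^p\mu\bigr)^{1/p}$, so $\int_0^1\!\!\int_M|\dot g_t(x)|\,\mu\,dt\le \vol(M)^{1-1/p}\,\mathcal L_p\{g_t\}$, giving $|\Phi(g)|\le A_0\,\mathcal L_p\{g_t\}+B_0$ with constants depending only on $M$, $\widetilde\phi$ and $p$.

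To upgrade this to the homogenization, apply the bound to $g^k$ along the concatenated isotopy $g_t*g_tg*\cdots$, whose $L^p$-length is $k\,\mathcal L_p\{g_t\}$: $|\Phi(g^k)|\le A_0 k\,\mathcal L_p\{g_t\}+B_0$. Also $|\widetilde\Phi(g^k)-\Phi(g^k)|$ is bounded independently of $k$ (the homogenization differs from $\Phi$ by a bounded amount, with bound controlled by $D_\Phi$). Hence $|\widetilde\Phi(g)|=\tfrac1k|\widetilde\Phi(g^k)|\le A_0\,\mathcal L_p\{g_t\}+\tfrac1k(B_0+\text{const})\to A_0\,\mathcal L_p\{g_t\}$. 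Taking the infimum over isotopies gives $|\widetilde\Phi(g)|\le A_0\,\mathbf d_p(\Id,g)$, and combining with the reduction in the first paragraph finishes the proof with Lipschitz constant $A=A_0$.

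The only genuinely delicate point is the pointwise estimate $|\widetilde\phi([g_x])|\le C'(\int_0^1|\dot g_t(x)|\,dt+1)$: one needs that a homogeneous quasi-morphism grows at most linearly in the Riemannian length of a loop, which uses that $\widetilde\phi$ is Lipschitz in the word metric (standard for homogeneous quasi-morphisms) together with a comparison constant between word length and the infimal Riemannian length over a homotopy class on the compact manifold $M$. The remaining steps — Fubini, Hölder, and the homogenization/right-invariance bookkeeping — are routine. I would also remark that the argument is insensitive to the precise choice of geodesic connecting paths, exactly as in Polterovich's construction, so no loss of constants there.
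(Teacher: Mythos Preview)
Your argument is correct and follows essentially the same route as the paper: bound $|\widetilde\phi([g_x])|$ linearly by word length, compare word length to Riemannian length of a representative loop (the paper invokes the \v{S}varc--Milnor lemma and the universal cover explicitly for this step, which is exactly the ``coarsely-Lipschitz comparison'' you cite), integrate over $M$, and apply H\"older to pass from $L^1$ to $L^p$. The only cosmetic differences are ordering (the paper reduces to $p=1$ at the outset via H\"older, you apply H\"older after integrating) and that you spell out the homogenization step removing the additive constant, which the paper leaves implicit in the sentence ``It is enough to show that $\Phi$ is a large scale Lipschitz map.''
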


\begin{rem*}
Theorem \ref{T:lsl-M} shows that the diameter of $(\Diff_0(M,\mu),\B d_p)$ is infinite if $Z(\pi_1(M,m))$ is trivial and $\pi_1(M,m)$ admits a non-trivial homogeneous quasi-morphism.
\end{rem*}

\begin{thm*}\label{T:lsl}
Let $\widetilde{\varphi}_n$ be a homogeneous quasi-morphism on $\B P_n$. Then the induced homogeneous quasi-morphism
$$
\widetilde{\Phi}_n\colon \cD \to \B R
$$
is Lipschitz with respect to the
$L^p$-metric on the group $\cD$.
\end{thm*}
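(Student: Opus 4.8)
The plan is to reduce the statement to a linear bound for the non-homogeneous quasi-morphism $\Phi_n$ along isotopies, and then to prove that bound by a change-of-variables argument built on the fact that the elements of $\cD$ are area-preserving. For the reduction, note first that right-invariance of $\B d_p$ gives $\B d_p(g^k,h^k)\leq k\,\B d_p(g,h)$ for all $g,h\in\cD$ and $k\in\N$ (telescope through the points $g^{k-j}h^j$ and cancel the common factor on the right). Hence, if we can produce constants $A,B>0$ with
\[
|\Phi_n(g)|\leq A\,\B d_p(\Id,g)+B\qquad\text{for all }g\in\cD,
\]
then applying this to $g^k$ and dividing by $k$ yields $|\widetilde{\Phi}_n(g)|\leq A\,\B d_p(\Id,g)$; writing $g=(gh^{-1})h$ and using the quasi-morphism inequality for the \emph{homogeneous} $\widetilde{\Phi}_n$ together with right-invariance gives $|\widetilde{\Phi}_n(g)-\widetilde{\Phi}_n(h)|\leq A\,\B d_p(g,h)+D_{\widetilde{\Phi}_n}$, and applying this in turn to $g^k,h^k$, dividing by $k$ and using homogeneity kills the additive constant. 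So it suffices to establish the displayed estimate.

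Fix $g$ and a smooth isotopy $\{g_t\}$ from $\Id$ to $g$; the goal becomes to bound $|\Phi_n(g)|$ by a constant multiple of $\mathcal{L}_1\{g_t\}$ plus a constant, since $\mathcal{L}_1\{g_t\}\leq\area(\D^2)^{1-1/p}\mathcal{L}_p\{g_t\}$ by Hölder (the disc has finite area) and one then passes to the infimum over isotopies. Since $\widetilde{\varphi}_n$ is a quasi-morphism on the finitely generated group $\B P_n$, it is Lipschitz in the word metric of any finite generating set: $|\widetilde{\varphi}_n(\beta)|\leq E\cdot|\beta|$, where $|\beta|$ is word length. The heart of the matter is therefore a geometric bound on $|\g(g;\overline{x})|$. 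For almost every $\overline{x}\in X_n$ the loop defining $\g(g;\overline{x})$ is the concatenation of the isotopy part $t\mapsto(g_t(x_1),\dots,g_t(x_n))$ with two straight segments in $(\D^2)^n$; the difference of the $i$-th and $j$-th straight segments is itself a straight segment in $\R^2$ from $z_i-z_j$ to $x_i-x_j$, which for almost every $\overline{x}$ avoids the origin and hence has angular variation $<\pi$, so the two tails contribute at most a constant depending only on $n$. For the isotopy part, each pair $i<j$ contributes, coarsely, at most the total angular variation of the planar curve $t\mapsto g_t(x_i)-g_t(x_j)$, bounded by
\[
\int_0^1\frac{|\dot g_t(x_i)|+|\dot g_t(x_j)|}{|g_t(x_i)-g_t(x_j)|}\,dt ,
\]
and the word length of $\g(g;\overline{x})$ in $\B P_n$ is then bounded by $c_1+c_2\sum_{i<j}$ of these integrals, with $c_1,c_2$ depending only on $n$ (this uses the iterated free-group structure of $\B P_n$ coming from the Fadell--Neuwirth fibrations of $X_n$, or the computations in \cite{GG,B}).

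It remains to integrate this bound over $X_n$. By Fubini, the $(i,j)$-term contributes $\int_0^1\!\int_{X_n}\frac{|\dot g_t(x_i)|+|\dot g_t(x_j)|}{|g_t(x_i)-g_t(x_j)|}\,d\overline{x}\,dt$; for each fixed $t$ substitute $y_k=g_t(x_k)$, which is measure-preserving on $X_n$ because $g_t$ preserves $\area$, enlarge the domain to $(\D^2)^n$ and integrate out the $n-2$ coordinates other than $y_i,y_j$, and use the $i\leftrightarrow j$ symmetry to bound the result by $2\,\area(\D^2)^{n-2}\int_{\D^2}|\dot g_t(g_t^{-1}(y))|\bigl(\int_{\D^2}|y-w|^{-1}\,dw\bigr)\,dy$. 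The inner integral is at most $C_0:=\sup_{y\in\D^2}\int_{\D^2}|y-w|^{-1}\,dw<\infty$ — this is the only place where two-dimensionality is used, through integrability of $|y|^{-1}$ in the plane — and changing variables back shows the $(i,j)$-term is at most $2C_0\area(\D^2)^{n-2}\int_0^1\!\int_{\D^2}|\dot g_t(x)|\,\area\,dt=2C_0\area(\D^2)^{n-2}\mathcal{L}_1\{g_t\}$. Summing over the $\binom{n}{2}$ pairs, adding the constant from the tails, multiplying by $E$ and invoking Hölder and the reduction of the first paragraph completes the proof. The main obstacle is the braid word-length estimate of the previous paragraph: one must control $|\g(g;\overline{x})|$ in $\B P_n$ itself — not merely its image in the abelianization $\Z^{\binom{n}{2}}$ — by the angular variations of the $\binom{n}{2}$ pairs, which requires unwinding the combinatorics of the pure braid group.
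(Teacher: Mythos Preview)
Your argument is correct and follows the same route as the paper: reduce to a large-scale Lipschitz bound for the non-homogeneous $\Phi_n$ in the $L^1$-metric, control $|\widetilde{\varphi}_n(\g(g;\overline{x}))|$ via word length, bound word length by the total angular variation of the pairs $g_t(x_i)-g_t(x_j)$, integrate over $X_n$, and use area-preservation together with the integrability of $|y|^{-1}$ on the disc to land on $\mathcal{L}_1\{g_t\}$.

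The one step you flag as ``the main obstacle'' is handled in the paper more simply than you seem to anticipate, and you need not invoke the Fadell--Neuwirth tower. The total angular variation over all pairs (plus the bounded contribution from the straight tails) bounds the number of crossings in a geometric braid representative of $\g(g;\overline{x})$; but the number of crossings is \emph{exactly} the word length in the full braid group $\B B_n$ with respect to the Artin generators $\s_1,\dots,\s_{n-1}$. Since $\B P_n$ has finite index in $\B B_n$, the $\B P_n$-word length with respect to any finite generating set is linearly bounded by the $\B B_n$-word length (plus a constant), and your inequality $|\g(g;\overline{x})|_{\B P_n}\leq c_1+c_2\sum_{i<j}(\text{angular variation})_{ij}$ follows. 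This replaces the combinatorics of $\B P_n$ by a one-line finite-index argument.
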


\begin{rem*}
It follows from \cite{B,GG} that for every $n\geq3$ there exists a homogeneous quasi-morphism $\widetilde{\Phi}_n\colon \cD \to \B R$ such that it does not vanish on the kernel of the Calabi homomorphism $\C\colon\cD\to\B R$. Hence Theorem \ref{T:lsl} gives another proof of the Theorem of Eliashberg and Ratiu \cite{ER}, which states that the diameter of $(\Ker(\C),\B d_p)$ is infinite (see also \cite{GL}).
\end{rem*}

\subsection{Applications}

Let $(G,\|\cdot\|_G)$ and $(G',\|\cdot\|_{G'})$ be two normed semigroups. A function
$f\colon G\to G'$
is a \emph{bi-Lipschitz embedding} if it is an \emph{injective homomorphism}, and there exists
a constant $A\geq1$ such that
\begin{equation}\label{eq:q-i-e}
A^{-1}\|g\|_G\leq \|f(g)\|_{G'}\leq A\|g\|_G.
\end{equation}
Note that if $G$ and $G'$ are groups, then the norms $\|\cdot\|_G$ and $\|\cdot\|_{G'}$ define right-invariant metrics on the groups $G$ and $G'$ in a natural way, i.e., $d_G(g,h):=\|gh^{-1}\|_G$ and $d_{G'}(g',h'):=\|g'h'^{-1}\|_{G'}$ for all $g,h\in G$ and $g',h'\in G'$. In this case by definition every bi-Lipschitz embedding is a quasi-isometric embedding.

Recall that the word norm on a group $\Gamma$ generated
by a symmetric finite set $S\subset \Gamma$ is defined by
$$
|\g|_S:=
\min\{k\in \B N\,|\,\g=s_1\ldots s_k \text{ where }s_i\in S\}.
$$
The word metric is defined by $\OP{d}_S(\g_1,\g_2):=|\g_1(\g_2)^{-1}|_S$.
It is right-invariant and it depends on the choice of a finite
generating set up to a bi-Lipschitz equivalence \cite[Example 8.17]{BH}.

In \cite{BK} Kedra and the author showed, that under certain conditions on the fundamental group $\pi_1(M,m)$, the group $(\Diff_0(M,\mu),{\bf d}_p)$ contains quasi-isometrically embedded finitely generated free Abelian group of an arbitrary finite rank. The following theorem generalizes this result for a wide class of compact Riemannian manifolds.

\begin{thm*}\label{T:free-ab}
\textbf{1.} Let $M$ be a compact connected and oriented Riemannian manifold of
dimension at least $3$, such that $Z(\pi_1(M,m))$ is trivial. Let $n\in\B N$ and $\{[\g_i]\}_{i=1}^n$ in $\pi_1(M,m)$. Suppose that $\pi_1(M,m)$ admits a family of homogeneous quasi-morphisms $\{\widetilde{\phi}_i\}_{i=1}^n$, such that $\widetilde{\phi}_i([\g_j])=\delta_{ij}$ where $\delta_{ij}$ is the Kronecker delta. Then $(\Diff_0(M,\mu),{\bf d}_p)$ contains bi-Lipschitz embedded $\B R^n$.\\
\textbf{2.} Let $\Sigma_g$ be a closed orientable surface of genus $g\geq 2$. Then the group $(\Diff_0(\Sigma_g,\mu),{\bf d}_p)$ contains bi-Lipschitz embedded $\B R^g$.
\end{thm*}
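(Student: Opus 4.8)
The plan is to combine the Lipschitz estimate of Theorem \ref{T:lsl-M} with a lower bound coming from the quasi-morphisms $\widetilde{\phi}_i$, using the classical fact that on a group carrying $n$ homogeneous quasi-morphisms that are ``dual'' to $n$ elements, one can detect an embedded $\B R^n$ inside the stable commutator length geometry. Concretely, for Part \textbf{1} I would first invoke the Polterovich construction to produce, from each $\widetilde{\phi}_i$, a homogeneous quasi-morphism $\widetilde{\Phi}_i\colon\Diff_0(M,\mu)\to\B R$; since $\dim M\geq 3$, the manifold $M$ is not a surface and the ambient remarks after Theorem \ref{T:lsl-M} apply. By Theorem \ref{T:lsl-M} each $\widetilde{\Phi}_i$ is Lipschitz with respect to $\B d_p$, say with constant $A_i$; set $A:=\max_i A_i$ and use the sup-norm on $\B R^n$. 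The key point is to build an actual homomorphism $\B R^n\to\Diff_0(M,\mu)$, not merely a quasi-morphism in the reverse direction, so that inequality \eqref{eq:q-i-e} makes sense.

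The construction of the homomorphism is the heart of the argument, and I expect it to be the main obstacle. The idea is to realize each loop $[\g_i]\in\pi_1(M,m)$ by a volume-preserving isotopy supported in a small ball (or a tubular neighborhood of an embedded circle representing $\g_i$) whose time-one map $h_i$ ``pushes along'' that loop; because $\dim M\ge 3$ one has enough room to take $n$ pairwise disjoint such neighborhoods, so the resulting diffeomorphisms $h_1,\dots,h_n$ commute and generate an embedded $\B Z^n$. One then has to upgrade $\B Z^n$ to $\B R^n$: this is done by taking a ``flow'' version of the above, i.e. choosing for each $i$ a one-parameter subgroup $t\mapsto h_i^t$ (a volume-preserving flow) realizing the $t$-th ``power'' of the loop in an appropriate sense, and setting $f(t_1,\dots,t_n):=h_1^{t_1}\circ\cdots\circ h_n^{t_n}$. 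One must check that $f$ is a genuine homomorphism (commutativity of the $h_i^t$, which follows from disjoint supports) and injective; injectivity will be extracted a posteriori from the quasi-morphism estimate below, so strictly speaking it is automatic once the lower bound is established. The subtle computation is that the induced quasi-morphisms evaluate correctly on these flows, namely $\widetilde{\Phi}_i(h_j^t)=c\, t\,\delta_{ij}$ for some fixed nonzero constant $c$ independent of $i,j,t$; this uses the defining integral formula for $\widetilde{\Phi}_i$ together with $\widetilde{\phi}_i([\g_j])=\delta_{ij}$ and the fact that, on the neighborhood where $h_j^t$ is supported, the loop $(h_j^t)_x$ represents (a bounded perturbation of) the $t$-th power of $[\g_j]$, while outside that neighborhood the loop is null-homotopic.

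Granting that computation, the lower bound follows from a standard quasi-morphism argument: for $v=(t_1,\dots,t_n)\in\B R^n$ and any isotopy from $\Id$ to $f(v)$ of $\B d_p$-length close to $\B d_p(\Id,f(v))$, homogeneity and the Lipschitz property give
\[
|c|\,|t_i| \;=\; |\widetilde{\Phi}_i(f(v))| \;\le\; A_i\,\B d_p(\Id,f(v)),
\]
and taking the maximum over $i$ yields $\|v\|_\infty\le (|c|)^{-1}A\,\B d_p(\Id,f(v))=(|c|)^{-1}A\,\|f(v)\|_{\B d_p}$. The upper bound $\|f(v)\|_{\B d_p}\le B\|v\|_\infty$ is obtained by concatenating the isotopies $t\mapsto h_i^{s t_i}$ and estimating their $L^p$-lengths, each of which is $O(|t_i|)$ because the $h_i^t$ move points with uniformly bounded speed through a fixed-volume neighborhood; summing over $i$ gives a linear bound. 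Together these give \eqref{eq:q-i-e} with $A':=\max\{(|c|)^{-1}A,\,B\}$, proving Part \textbf{1}. For Part \textbf{2}, since $\Sigma_g$ has dimension two Theorem \ref{T:lsl-M} still applies, and $\pi_1(\Sigma_g)$ is a surface group, hence hyperbolic and non-elementary, so by Bavard duality (or an explicit construction via the symplectic intersection form on $H_1(\Sigma_g;\R)$) it carries $g$ homogeneous quasi-morphisms dual to the standard generators $a_1,\dots,a_g$; applying Part \textbf{1}'s argument verbatim, with the one caveat that in dimension two one must arrange the $g$ supporting annuli to be pairwise disjoint — which is possible since the $a_i$ have pairwise disjoint embedded representatives on $\Sigma_g$ — yields the bi-Lipschitz embedded $\B R^g$.
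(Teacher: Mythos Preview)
Your proposal is correct and follows essentially the same route as the paper: realize the $[\g_i]$ by pairwise disjoint embedded circles (possible since $\dim M\ge 3$), build rotation flows $h_i^t$ in their tubular neighborhoods to obtain a homomorphism $\B R^n\to\Diff_0(M,\mu)$, evaluate $\widetilde{\Phi}_i$ on these flows via the integral formula and $\widetilde{\phi}_i([\g_j])=\delta_{ij}$, and combine the Lipschitz bound of Theorem~\ref{T:lsl-M} with the obvious upper estimate on the $L^p$-length of the concatenated isotopy. Two small remarks: the paper only records $\widetilde{\Phi}_i(h_i^1)>0$ (a volume lower bound) rather than a single constant $c$ independent of $i$, which is all that is needed; and for Part~\textbf{2} the paper does not invoke Bavard duality or hyperbolicity but simply takes the $g$ \emph{homomorphisms} $\pi_1(\Sigma_g)\to H_1(\Sigma_g;\B Z)\to\B R$ dual to the $a_i$'s --- exactly your parenthetical ``intersection form'' alternative.
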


Let $M$ be a closed negatively curved Riemannian manifold. Then $\pi_1(M,m)$ has a trivial center and is word-hyperbolic. It follows from \cite[Proposition 3.6]{EF} that for each $n\in\B N$ there exist words $\{[\g_i]\}_{i=1}^n$ in $\pi_1(M,m)$ and a family of homogeneous quasi-morphisms $\{\widetilde{\phi}_i\}_{i=1}^n$, such that $\widetilde{\phi}_i([\g_j])=\delta_{ij}$, where $\delta_{ij}$ is the Kronecker delta. As an immediate corollary we have

\begin{cor*}\it
Let $M$ be a closed negatively curved Riemannian manifold of dimension at least $3$. Then $(\Diff_0(M,\mu),{\bf d}_p)$ contains bi-Lipschitz embedded
vector space of an arbitrary finite dimension.
\end{cor*}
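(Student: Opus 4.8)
The plan is to derive the corollary directly from Theorem~\ref{T:free-ab}(1), so the only work is to check that a closed negatively curved manifold $M$ of dimension at least $3$ meets its hypotheses: that $Z(\pi_1(M,m))$ is trivial, and that for each $n\in\B N$ there are elements $[\g_1],\ldots,[\g_n]\in\pi_1(M,m)$ together with homogeneous quasi-morphisms $\widetilde{\phi}_1,\ldots,\widetilde{\phi}_n$ on $\pi_1(M,m)$ satisfying $\widetilde{\phi}_i([\g_j])=\delta_{ij}$.

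For the centre, I would recall that negative curvature and the Cartan--Hadamard theorem make the universal cover of $M$ a complete, simply connected manifold of negative curvature, hence a Gromov hyperbolic metric space on which $\pi_1(M,m)$ acts geometrically; by the Milnor--\v{S}varc lemma $\pi_1(M,m)$ is therefore word-hyperbolic. Since $\dim M\geq 2$, this group is non-elementary (it is not virtually cyclic --- it contains, say, a non-abelian free subgroup), and a non-elementary word-hyperbolic group has trivial centre, because a central element would have to lie in the centralizer of a loxodromic element, which is virtually cyclic, forcing the whole group to be virtually cyclic, a contradiction.

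For the family of quasi-morphisms I would invoke \cite[Proposition~3.6]{EF}, which says precisely that for a non-elementary word-hyperbolic group one can, for any $n$, produce such elements $[\g_i]$ and quasi-morphisms $\widetilde{\phi}_i$ with $\widetilde{\phi}_i([\g_j])=\delta_{ij}$ --- concretely, by choosing $n$ loxodromic elements with pairwise independent axes, taking the associated Brooks-type counting quasi-morphisms, and performing a Gram--Schmidt-type linear change so that the evaluation matrix becomes the identity. Granting this, Theorem~\ref{T:free-ab}(1) immediately yields a bi-Lipschitz embedded copy of $\B R^n$ in $(\Diff_0(M,\mu),{\bf d}_p)$, and since $n$ is arbitrary the group contains a bi-Lipschitz embedded vector space of every finite dimension. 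I do not expect a genuine obstacle here: the analytic heart of the matter --- the $L^p$-Lipschitz property of the induced quasi-morphisms --- is already contained in Theorem~\ref{T:lsl-M} and hence in Theorem~\ref{T:free-ab}(1), and the only feature specific to negative curvature is the word-hyperbolicity of $\pi_1(M,m)$, for which the controlled family of quasi-morphisms is the quoted Epstein--Fujiwara statement; the one point worth stating with care is why $\dim M\geq 2$ excludes the elementary case, as indicated above.
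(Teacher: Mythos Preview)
Your proposal is correct and follows exactly the paper's route: the corollary is deduced from Theorem~\ref{T:free-ab}(1) by noting that $\pi_1(M,m)$ is word-hyperbolic with trivial centre and then invoking \cite[Proposition~3.6]{EF} to produce the required dual family of elements and homogeneous quasi-morphisms. The only minor point is the phrasing of your centre argument --- what you want is that a nontrivial central element $z$ (necessarily of infinite order, since $M$ is aspherical) is itself loxodromic, so the \emph{entire} group lies in $C(z)$, which is virtually cyclic; as written it sounds as if $z\in C(g)$ alone forces $G$ to be virtually cyclic.
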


If $(M,\omega)$ is a symplectic manifold, then the group $\Diff_0(M,\mu)$ in all the results above
can be replaced either by the group $\OP{Symp}_0(M,\omega)$ of symplectic diffeomorphisms isotopic to the identity,
or by the group $\OP{Ham}(M,\omega)$ of Hamiltonian diffeomorphisms, see Remark \ref{R:symp}
in the proof of Theorem \ref{T:free-ab}. In case of the group $\OP{Ham}(M,\omega)$ the assumption on the triviality of $Z(\pi_1(M,m))$ may be dropped, i.e., Polterovich quasi-morphisms are well-defined on $\OP{Ham}(M,\omega)$. This follows from the fact that the map $\ev:\OP{Ham}(M,\omega)\to M$, where $\ev(g)=g(m)$, induces a trivial map on $\pi_1(\OP{Ham}(M,\omega),\Id)$, see \cite{McDuff}. The same proof as the proof of Theorem \ref{T:free-ab} (part \textbf{2}) proves the following

\begin{cor*}\it
The group $(\OP{Ham}(\Sigma_g,\omega),{\bf d}_p)$ contains bi-Lipschitz embedded $\B R^g$ for each $g\geq 1$. In particular, the diameter of $(\OP{Ham}(\Sigma_g,\omega),{\bf d}_p)$ is infinite for all $g\geq 1$.
\end{cor*}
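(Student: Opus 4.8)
The plan is to run the proof of Theorem~\ref{T:free-ab}(2) with $\Diff_0(\S_g,\mu)$ replaced everywhere by $\OP{Ham}(\S_g,\o)$, re-establishing along the way the two facts that proof used for $\Diff_0$: that the relevant Polterovich quasi-morphisms are defined, and that they are $\B d_p$-Lipschitz. I first fix the quasi-morphisms. For $g\geq2$ the group $\pi_1(\S_g,m)$ has trivial center and, being word-hyperbolic, carries homogeneous quasi-morphisms $\{\widetilde\phi_i\}_{i=1}^g$ dual to suitable classes (cf.\ \cite{EF}); Polterovich's construction then produces $\widetilde\Phi_i$ on $\Diff_0(\S_g,\mu)$, which I restrict to the subgroup $\OP{Ham}(\S_g,\o)$. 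For $g=1$ the center of $\pi_1(T^2)=\Z^2$ is the whole group, so these maps are not a priori defined on $\Diff_0$; here I invoke the evaluation-map fact of \cite{McDuff}, which makes the Polterovich quasi-morphism well defined on $\OP{Ham}(T^2,\o)$ directly.

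The Lipschitz bound transfers cheaply. Since Hamiltonian isotopies are a subfamily of volume-preserving ones, $\B d_p$ on $\OP{Ham}(\S_g,\o)$ dominates the restriction of $\B d_p$ from $\Diff_0(\S_g,\mu)$; hence every quasi-morphism that is $\B d_p$-Lipschitz on $\Diff_0$ remains so on $\OP{Ham}$. For $g\geq2$ this upgrades the conclusion of Theorem~\ref{T:lsl-M} to $\OP{Ham}$ at no cost. For $g=1$, where the quasi-morphism lives only on $\OP{Ham}$, I would instead rerun the proof of Theorem~\ref{T:lsl-M}, which bounds $|\widetilde\Phi(g)|$ by the $L^p$-length of an isotopy joining $\Id$ to $g$, feeding it a Hamiltonian isotopy.

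Next I would build the homomorphism $\B R^g\to\OP{Ham}(\S_g,\o)$. In $g$ pairwise disjoint regions (one per handle when $g\geq2$) I place compactly supported autonomous Hamiltonian flows $\{h^i_s\}$ that drag a disc once around a \emph{null-homologous} loop representing the commutator $[\a_i,\b_i]$ of the $i$-th handle generators; null-homology forces zero flux, so each $h^i_s$ is genuinely Hamiltonian. Disjoint supports make the flows commute, so $f_{\overline t}:=h^1_{t_1}\circ\cdots\circ h^g_{t_g}$ depends homomorphically on $\overline t$. The simultaneous isotopy $s\mapsto h^1_{st_1}\circ\cdots\circ h^g_{st_g}$ has $L^p$-speed at most a constant times $\|\overline t\|$, giving $\B d_p(f_{\overline t},\Id)\leq C\|\overline t\|$. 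Choosing the dual quasi-morphisms with $\widetilde\phi_i([\a_j,\b_j])=\delta_{ij}$, disjointness of supports together with homogenization should give $\widetilde\Phi_i(f_{\overline t})=c_i t_i$ with $c_i\neq0$. Since each $\widetilde\Phi_i$ is Lipschitz and vanishes at $\Id$, this yields $\min_i|c_i|\,\|\overline t\|_\infty\leq\max_i|\widetilde\Phi_i(f_{\overline t})|\leq C'\B d_p(f_{\overline t},\Id)$. Combined with the upper bound it gives $A^{-1}\|\overline t\|\leq\B d_p(f_{\overline t},\Id)\leq A\|\overline t\|$; in particular $f$ is injective, hence the desired bi-Lipschitz embedding, and letting $\|\overline t\|\to\infty$ shows the diameter is infinite.

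The crux is the nonvanishing and linear growth of $\widetilde\Phi_i$ on these Hamiltonian flows. A homogeneous quasi-morphism is odd and a Hamiltonian flow has vanishing flux, so the net homology of the loops $(f_{\overline t})_x$ must cancel; in particular a flow that merely winds a disc around a single homologically nontrivial curve contributes nothing, because on the cyclic subgroup it generates every homogeneous quasi-morphism is a homomorphism and is killed by the zero-flux condition. This is why I drag around null-homologous commutators: a genuine, non-homomorphic $\widetilde\phi_i$ can be nonzero on $[\a_i,\b_i]$, so the flux obstruction is survived, and for $g\geq2$ the handles furnish the commutators while hyperbolicity of $\pi_1(\S_g)$ furnishes the dual quasi-morphisms. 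For $g=1$ this breaks down entirely: $\pi_1(T^2)$ is abelian, it has no nontrivial commutators and no genuine quasi-morphisms, and every Polterovich quasi-morphism built from $\pi_1(T^2)$ is trivial on $\OP{Ham}(T^2,\o)$. The genuinely hard point is thus the torus, where I would replace the single-point Polterovich construction by the Gambaudo--Ghys construction on the \emph{nonabelian} surface pure-braid group $\B P_n(T^2)$ (the $T^2$-analogue of the quasi-morphisms of \cite{GG,B}), establishing its $\B d_p$-Lipschitz property by the argument of Theorem~\ref{T:lsl}.
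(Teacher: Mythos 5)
Your route is genuinely different from the paper's, and the difference is substantive. The paper's proof of this corollary is literally the proof of Theorem \ref{T:free-ab}(\textbf{2}) rerun with the isotopies made Hamiltonian via Remark \ref{R:symp}: the quasi-morphisms $\widetilde{\phi}_i$ used there are \emph{homomorphisms} $\pi_1(\S_g,m)\to\B R$ factoring through $H_1(\S_g,\B Z)$ and dual to $[\a_i]$, and the flows are twists supported in annular neighbourhoods of the curves. You instead take genuine (non-homomorphic) quasi-morphisms dual to the commutators $[\a_i,\b_i]$ and Hamiltonian flows dragging discs around null-homologous loops. Your reason for doing so --- that a $\widetilde{\phi}_i$ factoring through $H_1$ induces on $\OP{Ham}(\S_g,\o)$ essentially a flux-type homomorphism, which vanishes there (for an autonomous flow of $H$ one gets $\lim_k\frac1k\int_M\widetilde{\phi}_i([(g^k)_x])\,\mu=\int_{\b_i}\iota_{X_H}\o=\int_{\b_i}dH=0$) --- is correct, and it is an objection that applies to the paper's own argument, not just to a strawman: a Hamiltonian twist in an annulus around a non-separating curve necessarily has zero net winding, so inequality \eqref{eq:Phi>0} fails on $\OP{Ham}$. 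For $g\geq 2$ your construction is therefore the right fix; the one step you flag with ``should give'' (that $\widetilde{\Phi}_i(f_{\overline{t}})=c_i t_i$ with $c_i\neq 0$ for the disc-dragging flows) still needs an argument, but it is exactly the computation Polterovich performs in \cite{Polt} and goes through.

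The genuine gap is $g=1$, which the corollary explicitly includes. You correctly diagnose that every Polterovich quasi-morphism on $\OP{Ham}(T^2,\o)$ is trivial (homogeneous quasi-morphisms on the abelian group $\pi_1(T^2)$ are homomorphisms, killed by the zero-flux computation above), but your proposed replacement --- quasi-morphisms induced from the surface braid groups $\B P_n(T^2)$ --- is only named, not constructed. Nothing in the paper supplies it: Theorem \ref{T:lsl} is stated and proved only for the disc, and its proof uses the contractibility of $\cD$ and the identification of $\pi_1(X_n)$ with the pure braid group of the disc, neither of which transfers verbatim to $T^2$; one would further have to exhibit homogeneous quasi-morphisms on $\B P_n(T^2)$ that are nonzero on the braids traced by explicit commuting Hamiltonian flows, and re-prove the Lipschitz estimate in that setting. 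As written, the torus case of the corollary is not proved by your proposal.
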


\begin{rem*}
The group $\OP{Ham}(M,\omega)$ may be equipped with the famous Hofer metric \cite{Ho,LM}. Similar results to ours with respect to the Hofer metric were obtained by Py and Usher. In \cite{Py} Py showed that for
$g\geq 2$ the group $\OP{Ham}(\Sigma_g,\omega)$ contains bi-Lipschitz embedded copy of an arbitrary finitely generated free Abelian group. Recently Usher \cite{U} generalized this result and showed that for a wide class of symplectic manifolds $\ell^\infty$ bi-Lipschitz embeds into $\OP{Ham}(M,\omega)$.
\end{rem*}

Recall that $\cD:=\Diff(\D^2,\partial\D^2,\area)$ and $\C\colon\cD\to\B R$ is the Calabi homomorphism. In \cite{BG} Benaim and Gambaudo showed that the group $(\Ker(\C),\B d_2)$ contains quasi-isometrically embedded finitely generated free Abelian group of an arbitrary rank. The following theorem generalizes the result above.

\begin{thm*}\label{T:free-ab-D}
For each $n$ the group $(\cD, \B d_p)$ contains bi-Lipschitz embedded $\B R^n$.
Moreover, this statement holds for the group $(\Ker(\C),{\bf d}_p)$.
\end{thm*}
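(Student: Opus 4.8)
The plan is to produce $n$ pairwise-commuting one-parameter subgroups inside $\Ker(\C)$ together with $n$ homogeneous quasi-morphisms that are Lipschitz in $\B d_p$ and whose matrix of values on the generators is invertible; the bi-Lipschitz embedding of $\R^n$ is then formal. Concretely, I would fix diffeomorphisms $f_1,\ldots,f_n\in\cD$ that pairwise commute and lie in $\Ker(\C)$, each $f_i$ being the time-one map of an autonomous flow $\{f_i^t\}_{t\in\R}$, and define $F\colon\R^n\to\cD$ by $F(t_1,\ldots,t_n)=f_1^{t_1}\circ\cdots\circ f_n^{t_n}$. Since the flows commute this is a homomorphism (injectivity will follow from the quasi-morphism estimates below), and since $\C$ is a homomorphism with $\C(f_i^t)=t\,\C(f_i)=0$ its image lies in $\Ker(\C)$.

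For the upper bound in \eqref{eq:q-i-e}, right-invariance and the triangle inequality give $\B d_p(F(t),\Id)\le\sum_i\B d_p(f_i^{t_i},\Id)\le\sum_i|t_i|\,c_i$, where $c_i$ is the $L^p$-norm of the vector field generating $f_i^t$, so $\B d_p(F(t),\Id)\le C\|t\|$. For the lower bound I would invoke Theorem \ref{T:lsl}: choose Gambaudo-Ghys quasi-morphisms $\widetilde\Phi_{k_1},\ldots,\widetilde\Phi_{k_n}$, each Lipschitz with some constant $A_i$. Because $\B d_p(f_j^t,f_j^s)\le|t-s|\,c_j$, each flow is $\B d_p$-continuous, so each $t\mapsto\widetilde\Phi_{k_i}(f_j^t)$ is a $\B d_p$-continuous homogeneous quasi-morphism on $\R$, i.e. a continuous homomorphism, hence linear: $\widetilde\Phi_{k_i}(f_j^t)=t\,M_{ij}$ with $M_{ij}:=\widetilde\Phi_{k_i}(f_j)$. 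As a homogeneous quasi-morphism is additive on the abelian subgroup generated by the commuting $f_j$, we get $\widetilde\Phi_{k_i}(F(t))=\sum_j t_j M_{ij}=(Mt)_i$. If $M$ is invertible then $\|t\|\le\|M^{-1}\|\,\|Mt\|\le\|M^{-1}\|\,\sqrt n\,(\max_iA_i)\,\B d_p(F(t),\Id)$, which yields the lower bound and the injectivity of $F$.

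Everything therefore reduces to constructing commuting $f_i\in\Ker(\C)$ and quasi-morphisms making $M=[\widetilde\Phi_{k_i}(f_j)]$ invertible, and this is the step I expect to be the main obstacle. I would take $n$ pairwise disjoint closed discs $D_1,\ldots,D_n$ in the interior of $\D^2$ and support $f_i$ in $D_i$; disjoint supports guarantee commutativity, and one arranges $\C(f_i)=0$ by choosing the generating Hamiltonian on $D_i$ with vanishing Calabi invariant. Each $f_i$ is built to braid tracked points inside $D_i$ so as to realize a prescribed pure braid, and by locality of the supports the value $\widetilde\Phi_{k_i}(f_j)$ reduces to an integral, over configurations meeting $D_j$, of the braid quasi-morphism $\widetilde\varphi_{k_i}$ evaluated on the internal braiding of $f_j$ — in particular it vanishes on configurations avoiding $D_j$. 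Using that $\B P_{k}$ carries infinitely many linearly independent homogeneous quasi-morphisms, I would choose the internal braids of the $f_i$ and the $\widetilde\varphi_{k_i}$ in \emph{dual-basis} fashion so that $M$ is triangular with nonzero diagonal, hence invertible; since these quasi-morphisms can be taken not proportional to the Calabi homomorphism (the $k_i\ge3$ case of the Remark following Theorem \ref{T:lsl}), they remain nontrivial on $\Ker(\C)$, giving the stronger statement for $(\Ker(\C),\B d_p)$. The delicate point is controlling the cross-terms coming from configurations that meet several discs at once and verifying that the resulting matrix is genuinely nondegenerate; this is where the bulk of the work lies.
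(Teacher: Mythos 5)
Your reduction is exactly the paper's: commuting one-parameter subgroups in $\Ker(\C)$, a family of $\B d_p$-Lipschitz homogeneous quasi-morphisms supplied by Theorem \ref{T:lsl}, an invertible evaluation matrix $M$, and then the formal upper bound (triangle inequality plus autonomy of the flows) and lower bound ($\|t\|\lesssim\|Mt\|\lesssim \B d_p(F(t),\Id)$). All of that part is correct. But the entire substance of the theorem is the existence step you defer to the end, and there your proposal has a genuine gap that you yourself flag: with $f_j$ supported in disjoint discs $D_j$, the value $\widetilde\Phi_{k_i}(f_j)$ is an integral of $\widetilde\varphi_{k_i}\bigl(\g(f_j^m;\overline{x})\bigr)/m$ over \emph{all} configurations $\overline{x}\in X_{k_i}$, and for a configuration with some points inside $D_j$ and some outside, the resulting braid is not the ``internal braiding of $f_j$'' but an element of a sub-braid-group on which a general quasi-morphism of $\B P_{k_i}$ need not vanish or be computable by locality. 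No mechanism is given for evaluating these cross-terms, so the claimed triangularity of $M$ is unsupported, and nondegeneracy — the one nontrivial assertion in the whole argument — is left unproved.

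The paper closes exactly this gap by a different, explicitly computable construction: it takes radially symmetric autonomous Hamiltonians $H_i(x)=h_i(\|x\|^2)$ with $\int_{\D^2}H_i=0$ (so the flows lie in $\Ker(\C)$ and commute because they are all rotations by an $r$-dependent angle, with no need for disjoint supports), and uses the Gambaudo--Ghys formula $\widetilde{\Sign}_n(g_{t,i})=K_n\,t\int_0^1 y^{n-2}h_i(y)\,dy$ for the signature quasi-morphisms $\widetilde{\Sign}_3,\dots,\widetilde{\Sign}_{n+2}$. This converts invertibility of $M$ into nonsingularity of a moment matrix of the profiles $h_i$, which is trivially arranged; the dual family $\widetilde\Phi_i$ with $\widetilde\Phi_i(g_{t,j})=t\,\delta_{ij}$ is then obtained by taking linear combinations. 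If you want to complete your proof along your own lines you would need either an analogous closed formula for your quasi-morphisms on your model diffeomorphisms, or a genuine argument controlling the mixed-configuration contributions; as written, the proposal stops short of the theorem.
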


In \cite{CW} Crisp and Wiest generalized the results of \cite{BG} and proved that the group $(\cD,\B d_2)$ contains quasi-isometrically embedded planar right-angled Artin groups. To the best knowledge of the author no similar results are known for infinitely generated groups.

Let $\B Z^\infty$ be a lattice in $\ell^1$, i.e., $\B Z^\infty$ consists of all infinite sequences of integers, such that for each sequence $(n_1,n_2,\ldots)\in\B Z^\infty$ there exists $d\in\B N$ such that $n_i=0$ for each $i>d$. It follows that the metric on $\B Z^\infty$ is the word metric with respect to the infinite set $\{\pm e_i\}_{i=1}^\infty$, where $\pm e_i=(0,\ldots,0,\pm 1,0,\ldots)$ and 1 is placed in the $i$-th entry. The following question was posed to the author by M. Sapir.

\begin{q*}[M. Sapir]\label{q:Sapir}
Does $\B Z^\infty$ quasi-isometrically embed into $(\cD, \B d_p)$?
\end{q*}

Let $\B R^\infty_+$ denote the following positive normed semigroup in $\ell^1$: the semigroup $\B R^\infty_+$ consists of sequences
$(v_1,\ldots,v_k, \ldots)$, where $v_i\geq 0$ and there exists $N>0$ such that $v_i=0$ for each $i\geq N$. The following result is related to the question above and is proven in Section \ref{S:proofs}. It gives an example of a bi-Lipschitz embedding of an infinitely generated semigroup into $(\cD, \B d_p)$.

\begin{thm*}\label{T:undistorted}
The semigroup $\B R^\infty_+$ bi-Lipschitz embeds into $(\cD, \B d_p)$. Moreover, $\B R^\infty_+$ bi-Lipschitz embeds
into $(\Ker(\C), \B d_p)$.
\end{thm*}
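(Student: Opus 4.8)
The plan is to realize $\B R^\infty_+$ as a semigroup of pairwise commuting area-preserving diffeomorphisms and to detect its $\ell^1$-norm by a \emph{single} homogeneous quasi-morphism. Concretely, I would fix a Gambaudo--Ghys quasi-morphism $\widetilde\Phi=\widetilde\Phi_n$ with $n\ge 3$ that does not vanish on $\Ker(\C)$ (such $\widetilde\Phi$ exists by \cite{B,GG}), and produce a sequence of area-preserving diffeomorphisms $\phi_1,\phi_2,\dots\in\Ker(\C)$ with pairwise disjoint supports $U_1,U_2,\dots\subset\D^2$, each arising as the time-one map of an autonomous flow $\{\phi_i^t\}_{t\ge0}$ supported in $U_i$. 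For $v=(v_1,v_2,\dots)\in\B R^\infty_+$ (only finitely many $v_i\neq 0$) I set $F(v):=\prod_i\phi_i^{v_i}$. Since the supports are disjoint the factors commute, so $F$ is a well-defined semigroup homomorphism; and because $\C$ is a homomorphism and each $\phi_i^t$ lies on a one-parameter subgroup with $\C(\phi_i^t)=t\,\C(\phi_i)=0$, the image of $F$ lies in $\Ker(\C)$.

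For the Lipschitz (upper) bound I would use the flows themselves as paths: concatenating the $\{\phi_i^{v_i}\}$ gives $\B d_p(\Id,F(v))\le\sum_i\B d_p(\Id,\phi_i^{v_i})\le\sum_i v_i\,\|X_i\|_{L^p}$, where $X_i$ is the generating vector field of $\phi_i^t$; this is $\le A\sum_i v_i=A\|v\|$ provided the norms $\|X_i\|_{L^p}$ are uniformly bounded. For the co-Lipschitz (lower) bound I would invoke Theorem \ref{T:lsl}: $\widetilde\Phi$ is $L$-Lipschitz with respect to $\B d_p$, and being homogeneous with $\widetilde\Phi(\Id)=0$ it satisfies $\B d_p(\Id,g)\ge L^{-1}|\widetilde\Phi(g)|$. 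Now homogeneous quasi-morphisms are additive on commuting elements and linear along one-parameter subgroups, so $\widetilde\Phi(F(v))=\sum_i v_i\,\widetilde\Phi(\phi_i)$. Here the positivity built into $\B R^\infty_+$ is essential: if each $\widetilde\Phi(\phi_i)\ge c_0>0$, then, with no cancellation among the nonnegative $v_i$, one gets $\widetilde\Phi(F(v))\ge c_0\sum_i v_i=c_0\|v\|$, whence $\B d_p(\Id,F(v))\ge L^{-1}c_0\|v\|$. Injectivity is immediate from this lower bound. Dropping the requirement $\phi_i\in\Ker(\C)$ gives the embedding into $\cD$; all the displayed bounds already take place in $\cD$, and since the paths $\phi_i^t$ can be kept inside $\Ker(\C)$ the very same estimates prove the sharper statement for $(\Ker(\C),\B d_p)$.

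The heart of the matter, and the step I expect to be genuinely delicate, is the simultaneous \emph{uniformity}: since the $U_i$ are pairwise disjoint inside the unit disc their areas must tend to zero, yet I need $\|X_i\|_{L^p}\le A$ together with $\widetilde\Phi(\phi_i)\ge c_0>0$ for \emph{all} $i$. A diffeomorphism confined to a tiny round disc braids only the point-pairs it contains, and the associated quasi-morphism value then decays far faster than the $L^p$-cost, so naive rescaled copies of a fixed model fail. I would instead take the $U_i$ to be thin regions that wind around a common macroscopic part of $\D^2$ (for instance nested concentric annular supports), so that a bounded-energy stirring of $U_i$ braids the points of $U_i$ around a region of definite area and keeps $\widetilde\Phi(\phi_i)$ bounded below while $\|X_i\|_{L^p}$ stays bounded. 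A second subtlety enters for the $\Ker(\C)$ statement: the leading part of $\widetilde\Phi$ on such nearly rigid motions is proportional to the Calabi invariant (compare the Remark giving $\widetilde\Phi_2=K_{\widetilde\Phi_2}\,\C$), which vanishes on $\Ker(\C)$; hence the lower bound $\widetilde\Phi(\phi_i)\ge c_0$ must be extracted from the genuinely quasi-morphic part of $\widetilde\Phi$, and each $\phi_i$ must be adjusted by a Calabi-cancelling factor of disjoint support so as to preserve both the commutation and the two estimates. Exhibiting one geometric family of the $\phi_i$ that meets all of these constraints uniformly in $i$ is the crux of the argument.
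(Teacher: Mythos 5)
Your reduction of the theorem to a single detecting quasi-morphism is exactly the paper's Lemma \ref{lem:N-inf}: a homogeneous $\widetilde\Phi$ that is Lipschitz by Theorem \ref{T:lsl} and additive on a commuting family, with the values $\widetilde\Phi(\phi_i)$ all of one sign and bounded away from zero (so that the nonnegativity of the $v_i$ prevents cancellation), plus a uniform bound on the $L^p$-lengths of the generating flows, gives both bi-Lipschitz inequalities verbatim; that half of your argument is correct. The gap is the construction of the family $\{\phi_i\}$, which you rightly call the crux but do not carry out, and the direction you indicate --- pairwise \emph{disjoint} supports $U_i$ --- is not a technical inconvenience but an actual dead end for $p>1$. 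Indeed, disjointness forces $\area(U_i)\to 0$; for any path supported in $U_i$ (in particular your autonomous flow with generator $X_i$), H\"older gives $\|X_i\|_{L^p}\geq \area(U_i)^{\frac1p-1}\|X_i\|_{L^1}$, while Theorem \ref{T:lsl} applied with $p=1$ forces $\|X_i\|_{L^1}\geq \mathcal{L}_1\{\phi_i^t\}\cdot\frac{\mathcal{L}_1\{\phi_i^t\}^{-1}}{1}\geq A_1^{-1}|\widetilde\Phi(\phi_i)|\geq A_1^{-1}c_0$; hence $\|X_i\|_{L^p}\geq A_1^{-1}c_0\,\area(U_i)^{\frac1p-1}\to\infty$, and the uniform bound $\|X_i\|_{L^p}\leq A$ is unattainable. (For the signature quasi-morphisms the Calabi-cancellation you worry about makes even $p=1$ fail on thin annuli.) So ``thin disjoint regions winding around a common macroscopic part'' cannot satisfy both of your uniform requirements simultaneously.

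The paper escapes by abandoning disjointness altogether. It takes radial Hamiltonians $G_i(x)=h_{s_i}(\|x\|^2)$ for a discrete set $\{s_i\}\subset\left(\frac14,\frac13\right)$, whose supports are the heavily overlapping annuli $\frac12-s_i\leq\|x\|^2\leq\frac12+s_i$, all of which contain a fixed core annulus on which the $h_{s_i}$ coincide. Commutativity is then free --- every such flow is a rotation by a radius-dependent angle, so all of them commute regardless of supports --- and injectivity of $\Psi$ follows from the distinct support boundaries rather than from disjointness. The condition $\int_0^1 h_{s_i}(y)\,dy=0$ places the flows in $\Ker(\C)$, the detecting quasi-morphism is $\widetilde{\Sign}_3$ with $\widetilde{\Sign}_3(g_{1,i})=K_3\int_0^1 y\,h_{s_i}(y)\,dy<0$, and the two uniform bounds $M_1,M_2$ come from continuity of $s\mapsto\int_0^1 yh_s(y)\,dy$ and $s\mapsto\int_0^1 y^{p/2}\bigl|\tfrac{\partial}{\partial y}h_s(y)\bigr|^p\,dy$ on the compact interval $\left[\frac14,\frac13\right]$. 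Without this (or some equivalent) construction replacing your disjoint-support ansatz, the proof is incomplete.
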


\section{Proofs}\label{S:proofs}
The \textit{full braid group} $\B B_n$ on $n$ strings is abstractly defined via the following presentation:
$$\B B_n:=\langle\s_1,\ldots,\s_{n-1}|\hspace{2mm} \s_i\s_j=\s_j\s_i,\hspace{2mm}|i-j|\geq2;\hspace{2mm}\s_i\s_{i+1}\s_i=\s_{i+1}\s_i\s_{i+1}\rangle.$$
For a braid $\g\in \B B_n$ denote by $l(\g)$ the length of $\g$ with respect to the set $\{\s_i\}_{i=1}^{n-1}$.
For each $g\in\Diff_0(M,\mu)$ we denote by $\|g\|_p:=\B d_p(\Id,g)$.

\subsection{Proof of Theorem \ref{T:lsl-M}}
Let $S$ be a finite generating set for the group
$\pi_1(M,m)$, and denote by $\Pi_M\colon {M}_{\bullet}\to M$ the universal
Riemannian covering of $M$. This means that the metric
on $M_{\bullet}$ is induced from the Riemannian metric on $M$. The
corresponding distance will be denoted by $\OP{d}_{\bullet}$.

It is enough to show that $\Phi$ is a large scale Lipschitz map.
This means that we have to show that there exist constants $A,B\geq 0$
independent of $g$ such that
$$
A\cdot \|g\|_p+B\geq|\Phi(g)|.
$$

Let $g\in \Diff_0(M,\mu)$ and $\{g_t\}_{t\in [0,1]}\in \Diff_0(M,\mu)$
be an isotopy from the identity to $g$. It follows from the H\"older
inequality that $\|g\|_p\geq C_p\cdot \|g\|_1$,
where $C_p$ is some positive constant independent of $g$. Hence it is
enough to prove the statement for $p=1$.

For any homogeneous quasi-morphism $\widetilde{\phi}\colon\pi_1(M,m)\to\B R$ we have
\begin{equation}\label{eq:length-inequality-M}
|\widetilde{\phi}(\a)|\leq\left(D_{\widetilde{\phi}}+
\max\limits_{s\in S}|\widetilde{\phi}(s)|\right)\|\a\|_S.
\end{equation}
It follows that
\begin{equation}\label{eq:q-l-inequality}
\left|\Phi(g)\right|\leq K\int_M\|[g_x]\|_S\mu,
\end{equation}
where $K=D_{\widetilde{\phi}}+\max\limits_{s\in S}|\widetilde{\phi}(s)|$.

Recall that the loop $g_x$ is a concatenation of a geodesic path from $m$ to $x$, the path
$\{g_t(x)\}$ and a geodesic path from $g(x)$ to $m$. Let ${m}_{\bullet}\in \Pi_M^{-1}(m)$ and
let $\{g_{\bullet,t}(m_{\bullet})\}$ be the lift of the loop $g_x$ starting at the point $m_{\bullet}$.
The manifold $M$ is compact, hence by the $\check{\textrm{S}}$varc-Milnor
lemma \cite{BH,Mil}, the inclusion of the orbit of $m_{\bullet}$
with respect to the deck transformation group $\pi_1(M,m)$ defines
a quasi-isometry
\begin{equation*}
\pi_1(M,m)\stackrel{q.i.}\simeq ({M}_{\bullet},\OP{d}_{\bullet}).
\end{equation*}
In particular, it means that there exist positive constants $A',B'$, such that
\begin{equation}\label{eq:q-i-Milnor}
{d}_{\bullet}({m}_{\bullet},{g}_{\bullet,1}({m}_{\bullet}))\geq A'\|[g_x]\|_S-B'.
\end{equation}
Denote by $\diam(M)$ the diameter of $M$. We also have the estimate
\begin{equation}\label{eq:diam}
{d}_{\bullet}({m}_{\bullet},{g}_{\bullet,1}({m}_{\bullet}))\leq 2\diam(M)+\int_0^1|\dot{g}_t(x)|dt.
\end{equation}

Combining inequalities \eqref{eq:length-inequality-M}, \eqref{eq:q-l-inequality}, \eqref{eq:q-i-Milnor} and \eqref{eq:diam} we get that
\begin{eqnarray*}
\left|\Phi(g)\right|&\leq&K(A')^{-1}\left(\left(\int_0^1 dt\int_M|\dot{g}_t(x)|\mu\right)+\vol(M)(2\diam(M)+B')\right)\\
&=&K(A')^{-1}\mathcal{L}_1(\{g_t\})+K(A')^{-1}\cdot\vol(M)(2\diam(M)+B').
\end{eqnarray*}
Since the above inequalities hold for any isotopy $\{g_t\}_{t\in[0,1]}$ between the identity and $g$,
we obtain that
\begin{equation*}
|\Phi(g)|\leq A\cdot \|g\|_p+B,
\end{equation*}
where $A=C_p\cdot K(A')^{-1}$ and $B=K(A')^{-1}\cdot\vol(M)(2\diam(M)+B')$
and this concludes the proof.
\qed

\subsection{Proof of Theorem \ref{T:lsl}}\label{SS:proof_lsl} Let $n\geq 2$. It is enough to show that the \emph{non-homogeneous} quasi-morphism $\Phi_n\colon\cD\to\B R$ is large scale Lipschitz, i.e., there exist two constants $A,B\geq 0$, such that for every $g\in\cD$
$$|\Phi_n(g)|\leq A\|g\|_p+B.$$

Let $g\in\cD$. For an isotopy $\{g_t\}\in\cD$ between $Id$
and $g$, any $\overline{x}\in X_n$ and $1\leq i,j\leq n,\thinspace i\neq j$ let $l_{i,j}\colon[0,1]\to \B S^1$, such that
$$l_{i,j}(t):=\frac{g_t(x_i)-g_t(x_j)}{\|g_t(x_i)-g_t(x_j)\|}\quad \textrm{and} \quad  L_{i,j}(\overline{x}):=\frac{1}{2\pi} \int\limits_{0}^{1}\left\|\frac{\partial}{\partial t}(l_{i,j}(t))\right\|dt,$$
where $\|\cdot\|$ is the Euclidean norm. Note that $L_{i,j}(\overline{x})$
is the length of the path $l_{i,j}(t)$ divided by $2\pi$. It follows that $L_{i,j}(\overline{x})+4$ is an upper bound for the number of times the string $i$ turns around the string $j$ in the positive direction plus the number of times the string $i$ turns around the string $j$ in the negative direction in the braid $\g(g;\overline{x})$. Recall that a representative of the braid $\g(g;\overline{x})$ is build using any isotopy $\{g_t\}\in\cD$ between $Id$ and $g$. It follows that the number of crossings in any such representative is less then or equal to $\sum_{i<j}^{n}2\left(L_{i,j}(\overline{x})+4\right)$. By definition the number of crossings in any such representative of the braid $\g(g;\overline{x})$ is bigger than the length of the braid $\g(g;\overline{x})$. Thus we get the following inequality
\begin{equation}\label{eq:inequality1}
\displaystyle\sum_{i<j}^{n}2\left(L_{i,j}(\overline{x})+4\right)\geq l(\g(g;\overline{x})),
\end{equation}
where $l(\g(g;\overline{x}))$ is the word length of the braid $\g(g;\overline{x})$.
Take any finite generating set $S$ of $\B P_n$. Note that for any homogeneous quasi-morphism
$\widetilde{\varphi}_n\colon\B P_n\to\B R$ one has
\begin{equation}\label{eq:length-inequality}
|\widetilde{\varphi}_n(\g)|\leq\left(D_{\widetilde{\varphi}_n}+
\max\limits_{s\in S}|\widetilde{\varphi}_n(s)|\right)l_S(\g),
\end{equation}
where $l_S(\g)$ is the length of a word $\g$ with respect to $S$, and $D_{\widetilde{\varphi}_n}$ is the defect of ${\widetilde{\varphi}_n}$. Recall that the pure braid group $\B P_n$ is a normal subgroup of finite index in $\B B_n$. It follows from \cite[Corollary 24]{Harpe} that there exist two positive constants $K_{1,S}$ and $K_{2,S}$ independent of $\g$,
such that $$l_S(\g)\leq K_{1,S}\cdot l(\g)+K_{2,S}.$$

It follows from \eqref{eq:length-inequality} that
\begin{equation}\label{eq:inequality2}
|\widetilde{\varphi}_n(\g(g;\overline{x}))|\leq N_1l(\g(g;\overline{x}))+N_2,
\end{equation}
where $N_1 =K_{1,S}(D_{\widetilde{\varphi}_n}+\max\limits_{s\in S}|\widetilde{\varphi}_n(s)|)$ and
$N_2=K_{2,S}(D_{\widetilde{\varphi}_n}+\max\limits_{s\in S}|\widetilde{\varphi}_n(s)|)$. Inequalities
\eqref{eq:inequality1} and \eqref{eq:inequality2} yield the
following inequality:
\begin{equation*}
|\widetilde{\varphi}_n(\g(g;\overline{x}))|\leq 2N_1\left(\displaystyle\sum_{i<j}^{n}L_{i,j}(\overline{x})+4\right)+N_2.
\end{equation*}
It follows that
\begin{equation}\label{eq:inequality3}
\left|\Phi_n(g)\right|\leq N_3\left(\sum\limits_{i<j}^n
\int\limits_{\D^2\times\D^2}L_{i,j}(\overline{x}) dx_idx_j\right)+B,
\end{equation}
where $N_3=2N_1 \cdot\vol((\D^2)^{n-2})$ and $B=(4N_1(n-1)n+N_2)\vol((\D^2)^n)$.
It follows from the definition of $L_{i,j}$ that
\begin{equation}\label{eq:inequality4}
\sum\limits_{i<j}^n\int\limits_{\D^2\times\D^2}L_{i,j}(\overline{x}) dx_idx_j=\frac{(n-1)n}{4\pi}\int\limits_{0}^{1}\int\limits_{\D^2\times\D^2} \left\|\frac{\partial}{\partial t}\left(\frac{g_t(x)-g_t(y)}{\|g_t(x)-g_t(y)\|}\right)\right\|dx dy dt.
\end{equation}
Cauchy-Schwartz inequality yields
\begin{equation}\label{eq:inequality5}
\int\limits_{0}^{1}\int\limits_{\D^2\times\D^2} \left\|\frac{\partial}
{\partial t}\left(\frac{g_t(x)-g_t(y)}{\|g_t(x)-g_t(y)\|}\right)\right\|dx dy dt\leq
\int\limits_{0}^{1}\int\limits_{\D^2\times\D^2}\frac{4\|\dot{g}_t(x)\|}{\|g_t(x)-g_t(y)\|}dx dy dt.
\end{equation}
By using the polar coordinates we conclude that for each $x\in \D^2$
\begin{equation*}
\int\limits_{\D^2}\frac{1}{\|x-y\|}dy\leq 4\pi.
\end{equation*}
Using the above inequality and the fact that the isotopy $g_t$ is area-preserving we have
\begin{equation}\label{eq:inequality6}
\int\limits_0^1\int\limits_{\D^2\times\D^2}\frac{\|\dot{g}_t(x)\|}{\|g_t(x)-g_t(y)\|}dy dx dt\leq
4\pi\int\limits_0^1\int\limits_{\D^2}\|\dot{g}_t(x)\|dx dt.
\end{equation}
We combine inequalities \eqref{eq:inequality3}, \eqref{eq:inequality4}, \eqref{eq:inequality5}, \eqref{eq:inequality6} and get
$$\left|\Phi_n(g)\right|\leq A\int\limits_0^1\int\limits_{\D^2}\|\dot{g}_t(x)\|dx dt+B:=A\mathcal{L}_1\{g_t\}+B,$$
where $A=4N_3(n-1)n$.
Since the above inequality holds for any isotopy $\{g_t\}_{t\in[0,1]}$ between the identity and $g$, we obtain that
$$\left|\Phi_n(g)\right|\leq A\|g\|_1+B.$$
The above inequality concludes the proof of the theorem in case $p=1$.

Let $p>1$. It follows from H\"older inequality that there exists a positive constant $C_p$ such that
$\|g\|_1\leq C_p\|g\|_p$, and the proof follows.
\qed

\subsection{Proof of Theorem \ref{T:free-ab}}
Suppose that the dimension of $M$ is at least $3$. Hence there exists a family of disjoint simple closed curves $\{[\g'_i]\}_{i=1}^n$ such that the curve $\g'_i$ is free loop homotopic to the curve $\g_i$. It follows from the tubular neighborhood theorem that for each $\g'_i$ there exists
$r_i>0$, the standard $(n-1)$-dimensional ball $B_{r_i}^{n-1} \subset \B{R}^n$ of radius $r_i>0$,
and a volume-preserving embedding
$$
\OP{emb}_i:B_{r_i}^{n-1}\times \B S^1\hookrightarrow M,
$$
such that $\OP{emb}_i|_{\{0\}\times \B S^1}$ is the curve $\g'_i$ and $r_i=\OP{d}_M(\g'_i, \OP{emb}_i|_{\partial B_{r_i}^{n-1}\times \B S^1})$. Moreover, if $i\neq j$ then the images of $\OP{emb}_i$ and $\OP{emb}_j$ are disjoint.
The volume form on the product $B_{r_i}^{n-1}\times \B S^1$ is the standard Euclidean volume and $\OP{d}_M$ denotes the distance on
$M$ induced by the Riemannian metric.

Let $r=\min\limits_{1\leq i\leq k}{r_i}$. Then $\OP{emb}_i:B_r^{n-1}\times \B S^1\hookrightarrow M$
is volume-preserving for each $i$ and $\OP{emb}_i|_{{0}\times  \B S^1}=\g'_i$.
It is straightforward to construct a smooth isotopy of volume-preserving diffeomorphisms
$$
g_t:B_{r}^{n-1}\times \B S^1\to B_{r}^{n-1}\times \B S^1
$$
with $g_0=\OP{Id}$ such that:
\begin{itemize}
\item
For each $t\in \B R$ the diffeomorphism $g_t$ equals to the identity
in the neighborhood of $\partial B_{r}^{n-1}\times \B S^1$, and
the time-one map $g_1$ is equal to the identity on $B_{r'}^{n-1}\times \B S^1$,
where $0<r'<r$.
\item
Each diffeomorphism $g_t$ preserves the foliation of
$B_r^{n-1}\times \B S^1$ by the circles $\{x\}\times \B S^1$. Each diffeomorphism $g_t$ preserves the orientation for $t\geq 0$. In addition, for
every $x\in B^{n-1}_{r'}$ the restriction $g_t\colon \{x\}\times \B S^1\to \{x\}\times \B S^1$ is the rotation by $2\pi\,t$.
\item
For all $s,t\in\B R$ we have $g_{t+s}=g_t\circ g_s$.
\end{itemize}

\begin{rem}\label{R:symp}
Notice that if $M$ is a symplectic manifold then the
above isotopies can be constructed to be Hamiltonian.
\end{rem}

We identify $B^{n-1}_{r}\times \B S^1$ with its image with respect to the embedding $\OP{emb}_i$. Then we extend
an isotopy $g_t$ by the identity on $M\setminus (B_r^{n-1}\times~\B S^1)$ obtaining smooth isotopies $g_{t,i}\in \Diff_0(M,\mu)$.
Using the fact that $\widetilde{\phi}_i([\g_j])=\delta_{ij}$ we get
\begin{equation}\label{eq:Phi>0}
\widetilde{\Phi}_i(g_{1,i}):=\lim\limits_{k\to\infty}\frac{1}{k}\int\limits_M \widetilde{\phi}_i([(g_{1,i}^k)_x])\mu\geq
\vol(B_{r'}^{n-1}\times \B S^1)>0,
\end{equation}
where the first inequality follows from the fact that $\lim\limits_{k\to\infty}\widetilde{\phi}_i([(g_{1,i}^k)_x])$ is zero for $x\in M\setminus(B_{r}^{n-1}\times \B S^1)$, non-negative for $x\in (B_{r}^{n-1}\times \B S^1)$ and equals to 1 for $x\in (B_{r'}^{n-1}\times \B S^1)$.
Similarly, in case when $j\neq i$ we have
\begin{equation}\label{eq:Phi=0}
\widetilde{\Phi}_i(g_{1,j}):=\lim\limits_{k\to\infty}\frac{1}{k}\int\limits_M \widetilde{\phi}_i([(g_{1,j}^k)_x])\mu=0.
\end{equation}
We define a homomorphism $\Psi\colon\B R^n\to \Diff_0(M,\mu)$ by setting
$$\Psi(v):=g_{v_1,1}\circ\ldots\circ g_{v_n,n},$$
where $v=(v_1,\ldots,v_n)\in\B R^n$.
Diffeomorphisms $g_{v_i,i}$ and $g_{v_j,j}$ have disjoint supports for $i\neq j$ and commute for all $1\leq i,j\leq n$. It follows that $\Psi$ is a monomorphism, and in addition $\widetilde{\Phi}_i(g_{v_i,j})=v_i\widetilde{\Phi}_i(g_{1,j})$ for all $1\leq i,j\leq n$, because $\widetilde{\Phi}_i$ is a continuous homogeneous quasi-morphism.

We claim that $\Psi$ is a bi-Lipschitz embedding. Indeed, by Theorem \ref{T:lsl-M} and by \eqref{eq:Phi>0} and \eqref{eq:Phi=0} the following inequalities hold for each $1\leq i\leq n$:
$$\|g_{v_1,1}\circ\ldots\circ g_{v_n,n}\|_p\geq A^{-1}\left|\widetilde{\Phi}_i(g_{v_1,1}\circ\ldots\circ g_{v_n,n})\right|=
A^{-1}\cdot |v_i|\left|\widetilde{\Phi}_i(g_{1,i})\right|,$$
where $A$ is the maximum over the Lipschitz constants of the functions ${\widetilde{\Phi}}_i\colon\Diff_0(M,\mu)\to\B R$. It follows that
$$\|g_{v_1,1}\circ\ldots\circ g_{v_n,n}\|_p\geq \left((n\cdot A)^{-1}\min_i\left|\widetilde{\Phi}_i(g_{1,i})\right|\right) \|v\|,$$
where $\|v\|$ is the $l^1$-norm of $v$ in $\B R^n$. On the other hand
$$\|g_{v_1,1}\circ\ldots\circ g_{v_n,n}\|_p\leq\sum\limits_{i=1}^n\mathcal{L}_p\{g_{t\cdot v_i,i}\}=
\sum\limits_{i=1}^n |v_i|\mathcal{L}_p\{g_{t,i}\}\leq\max_i\mathcal{L}_p\{g_{t,i}\}\|v\|,$$
where $\{g_{t,i}\}$ is the isotopy between $\Id$ and $g_{1,i}$ and $t\in[0,1]$.

Now we deal with the case when $M=\Sigma_g$ and $g\geq 2$. Let us take a basis $\{[\a_i],[\b_i]\}_{i=1}^g$ of
$H_1(\Sigma_g, \B Z)\cong \B Z^{2g}$ such that all $\a_i$, $\b_i$ are simple closed curves, the oriented intersection number
$\#([\a_i]\cap [\b_i])=1$, the intersection $\a_i\cap\b_i$ consists of 1 point, $\a_i\cap\b_j=\emptyset$, $\a_i\cap\a_j=\emptyset$ and $\b_i\cap\b_j=\emptyset$ for all different $1\leq i,j\leq g$. Denote by
$\rho_i\colon H_1(\Sigma_g, \B Z)\to \B R$ the homomorphism which sends $[\a_i]$ to $1$ and all other elements to zero.
Let $\widetilde{\phi}_i\colon \pi_1(\Sigma_g, m)\to\B R$ be the composition of $\rho_i$ with the projection homomorphism
$\pi_1(\Sigma_g, m)\to H_1(\Sigma_g, \B Z)$.

The same proof as in the previous step shows that there exists a family of smooth isotopies $g_{t,i}$ in $\Diff_0(\Sigma_g,\mu)$ such that
\begin{itemize}
\item
For each $t\in\B R$ diffeomorphisms $g_{t,i}$ have disjoint supports for $1\leq i\leq g$.
\item
For all $s,t\in\B R$ and $1\leq i\leq g$ we have $g_{t+s,i}=g_{t,i}\circ g_{s,i}$.
\item
For each induced homomorphism $\Phi_i\colon\Diff_0(\Sigma_g,\mu)\to\B R$ we have $\Phi_i(g_{1,i})>0$ and $\Phi_i(g_{1,j})=0$
for all $1\leq i,j\leq n$.
\end{itemize}
Let $\Psi\colon\B R^g\to \Diff_0(\Sigma_g,\mu)$ be a homomorphism such that
$$\Psi(v):=g_{v_1,1}\circ\ldots\circ g_{v_g,g},$$
where $v=(v_1,\ldots,v_g)$. The same proof as in the previous step shows that $\Psi$ is a bi-Lipschitz embedding.
\qed

\subsection{Proof of Theorem \ref{T:free-ab-D}}
Let $\widetilde{\sign}_n\colon\B P_n\to\B R$ be the homogeneous quasi-morphism defined by signature link invariant $\sign$ \cite{L} of links in $\B R^3$. For a precise definition and properties of $\widetilde{\sign}_n$ see \cite{B,GG}. Denote by $\widetilde{\Sign}_n$ the induced homogeneous quasi-morphism on $\cD$. Let $H\colon\D^2\to\B R$ be a smooth function, such that $H(x)=h(\|x\|^2)$, where $h=0$ in the neighborhood of $0$ and $1$, and $\int\limits_{\D^2}H(x) dx=0$. Denote by $g_t$ the flow generated by $H$. Then $g_t\in\Ker(\C)$, because $\C(g_t)=2t\int\limits_{\D^2}H(x) dx$ (see \cite{GG1}). In \cite{GG} it was shown that there exists $K_n>0$, such that
$$\widetilde{\Sign}_n(g_t)=K_n\cdot t\int_0^1y^{n-2}h(y)dy.$$

Let $n\in \B N$. It is straight forward to construct a family of functions $\{H_i\}_{i=1}^n$, where $H_i(x)=h_i(\|x\|^2)$, such that
\begin{itemize}
\item
Each Hamiltonian flow $g_{t,i}$ generated by $H_i$ lies in $\Ker(\C)$.
\item
Diffeomorphisms $g_{t,i}$ and $g_{s,j}$ commute for $s,t\in \B R$, $1\leq i,j\leq n$.
\item
The matrix $\left(
              \begin{array}{ccc}
                \widetilde{\Sign}_3(g_{1,1}) & \cdots & \widetilde{\Sign}_3(g_{1,n}) \\
                \vdots & \vdots & \vdots \\
                \widetilde{\Sign}_{n+2}(g_{1,1}) & \cdots & \widetilde{\Sign}_{n+2}(g_{1,n}) \\
              \end{array}
            \right)
$ is non-singular.
\end{itemize}
It follows that there exists a family $\{\widetilde{\Phi}_i\}_{i=1}^n$ of homogeneous quasi-morphisms on $\cD$, such that $\widetilde{\Phi}_i$ is a linear combination of $\widetilde{\Sign}_n$'s and
\begin{equation}\label{eq:sign-basis}
\widetilde{\Phi}_i(g_{t,j})=\left\{
                                          \begin{array}{c}\begin{aligned}
                                            &t  &\rm{if}&\quad i=j\\
                                            &0  &\rm{if}&\quad i\neq j\\
                                            \end{aligned}
                                          \end{array}
                                        \right ..
\end{equation}

Let $\Psi\colon\B R^n\to \cD$ be a map, such that
$\Psi(v):=g_{v_1,1}\circ\ldots\circ g_{v_n,n}$
and $v=(v_1,\ldots,v_n)$. It follows from the construction of $\{g_{v_i,i}\}_{i=1}^n$ that $\Psi$ is a monomorphism. The same proof as in Theorem \ref{T:free-ab} shows that there exists $A_n>0$, such that
$$\|g_{v_1,1}\circ\ldots\circ g_{v_n,n}\|_p\leq A_n \|v\|.$$
Let $1\leq i\leq n$. All diffeomorphisms $g_{v_1,1},\ldots, g_{v_n,n}$ pair-wise commute. Hence by Theorem \ref{T:lsl} and equality \eqref{eq:sign-basis} we have
$$\|g_{v_1,1}\circ\ldots\circ g_{v_n,n}\|_p\geq A^{-1}\left|\widetilde{\Phi}_i(g_{v_1,1}\circ\ldots\circ g_{v_n,n})\right|=
A^{-1}\cdot |v_i|\left|\widetilde{\Phi}_i(g_{1,i})\right|,$$
where $A$ is the maximum over the Lipschitz constants of the functions ${\widetilde{\Phi}}_i\colon\cD\to\B R$. It follows that
$$\|g_{v_1,1}\circ\ldots\circ g_{v_n,n}\|_p\geq \left((n\cdot A)^{-1}\min_i\left|\widetilde{\Phi}_i(g_{1,i})\right|\right) \|v\|,$$
and the proof follows.
\qed

\subsection{Proof of Theorem \ref{T:undistorted}}\label{SS:proof_undistorted}

\begin{lem}\label{lem:N-inf}
Suppose that there exists a homogeneous quasi-morphism $\widetilde{\Phi}\colon\cD\to\B R$ and a family of diffeomorphisms $\{g_{t,i}\}_{i=1}^\infty$ in $\Ker(\C)$ for each $t\in\B R$ such that
\begin{itemize}
\item
The map $\B R^\infty_+\to \Ker(\C)$, $(v_1,\ldots,v_k,0\ldots)\to g_{v_1,1}\circ\ldots\circ g_{v_1,k}$, is an injective homomorphism for each $k$;
\item
the numbers $\left\{\widetilde{\Phi}(g_{1,i})\right\}_{i=1}^\infty$ have the same sign for all $i\in\B N$;
\item
$\inf\limits_i\left|\widetilde{\Phi}(g_{1,i})\right|\geq M_1$, $\sup\limits_i\mathcal{L}_p\{g_{t,i}\}\leq M_2$, where $\{g_{t,i}\}_{t\in[0,1]}$, and $M_1>0$ and $M_2> 0$.
\end{itemize}
Let $\Psi\colon\B R^\infty_+\to\Ker(\C)$ where $\Psi(v_1,\ldots,v_k,0\ldots)=g_{v_1,1}\circ\ldots\circ g_{v_k,k}$. Then $\Psi$ is a bi-Lipschitz embedding.
\end{lem}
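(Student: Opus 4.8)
The plan is to show the two-sided Lipschitz bound directly, following the same template used in the proofs of Theorems \ref{T:free-ab} and \ref{T:free-ab-D}, and then observe that the constants are uniform in $k$ thanks to the hypotheses $\inf_i|\widetilde{\Phi}(g_{1,i})|\geq M_1$ and $\sup_i\mathcal{L}_p\{g_{t,i}\}\leq M_2$. Fix $v=(v_1,\ldots,v_k,0,\ldots)\in\B R^\infty_+$ and write $\|v\|=\sum_i v_i$ for its $\ell^1$-norm. The upper bound is the easy direction: concatenating the isotopies $\{g_{t\cdot v_i,i}\}_{t\in[0,1]}$ (whose time-one maps have disjoint supports and commute, by the first hypothesis) produces an isotopy from $\Id$ to $\Psi(v)$, so subadditivity of $\mathcal{L}_p$ together with the rescaling identity $\mathcal{L}_p\{g_{t\cdot v_i,i}\}=v_i\,\mathcal{L}_p\{g_{t,i}\}$ gives
\begin{equation*}
\|\Psi(v)\|_p\leq\sum_{i=1}^k v_i\,\mathcal{L}_p\{g_{t,i}\}\leq M_2\|v\|.
\end{equation*}

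For the lower bound I would use that $\widetilde{\Phi}$ is a homogeneous quasi-morphism that is Lipschitz with respect to $\B d_p$: this is exactly Theorem \ref{T:lsl}, so there is a constant $A\geq0$ with $|\widetilde{\Phi}(h)|\leq A\|h\|_p$ for all $h\in\cD$. Since $g_{v_1,1},\ldots,g_{v_k,k}$ pairwise commute and $\widetilde{\Phi}$ is a homogeneous quasi-morphism, it is additive on this commuting family, so $\widetilde{\Phi}(\Psi(v))=\sum_{i=1}^k\widetilde{\Phi}(g_{v_i,i})=\sum_{i=1}^k v_i\,\widetilde{\Phi}(g_{1,i})$, where the last equality uses homogeneity and continuity. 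By the sign hypothesis all the summands $v_i\widetilde{\Phi}(g_{1,i})$ have the same sign, hence there is no cancellation and
\begin{equation*}
|\widetilde{\Phi}(\Psi(v))|=\sum_{i=1}^k v_i\,|\widetilde{\Phi}(g_{1,i})|\geq M_1\sum_{i=1}^k v_i=M_1\|v\|.
\end{equation*}
Combining this with $|\widetilde{\Phi}(\Psi(v))|\leq A\|\Psi(v)\|_p$ yields $\|\Psi(v)\|_p\geq A^{-1}M_1\|v\|$ (the constant $A$ coming from Theorem \ref{T:lsl} applied to this single fixed quasi-morphism $\widetilde{\Phi}$, so it does not depend on $v$ or $k$). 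Setting $A':=\max\{M_2,A M_1^{-1}\}$ gives the bi-Lipschitz estimate $(A')^{-1}\|v\|\leq\|\Psi(v)\|_p\leq A'\|v\|$ of \eqref{eq:q-i-e}, and injectivity of $\Psi$ is part of the hypotheses, so $\Psi$ is a bi-Lipschitz embedding.

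I do not expect a genuine obstacle here: the lemma is essentially an abstraction of the argument already carried out twice, and the only points requiring a line of care are (i) the commuting family is used twice, once to conclude $\|\cdot\|_p$-subadditivity via disjoint supports and once to conclude additivity of $\widetilde{\Phi}$, and (ii) that the Lipschitz constant $A$ from Theorem \ref{T:lsl} is attached to the single quasi-morphism $\widetilde{\Phi}$ and is therefore a true constant, while the supremum/infimum hypotheses are precisely what make $M_1,M_2$ independent of $k$ — this uniformity in $k$ is the one feature distinguishing the infinitely generated case from the finite-rank Theorems \ref{T:free-ab} and \ref{T:free-ab-D}. The verification that $\B R^\infty_+$ carries the $\ell^1$-metric and that $\Psi$ respects it is then immediate from the displayed inequalities.
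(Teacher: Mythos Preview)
Your proof is correct and follows essentially the same route as the paper: an upper bound via the triangle inequality together with the uniform bound $\sup_i\mathcal{L}_p\{g_{t,i}\}\leq M_2$, and a lower bound via Theorem~\ref{T:lsl}, additivity of $\widetilde{\Phi}$ on commuting elements, the common-sign hypothesis, and $\inf_i|\widetilde{\Phi}(g_{1,i})|\geq M_1$. The only cosmetic slip is the parenthetical ``disjoint supports'' in your upper-bound step --- the first hypothesis asserts only that $\Psi$ is an injective homomorphism (hence the $g_{v_i,i}$ commute), and that is all you actually use.
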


\begin{proof}
Let $(v_1,\ldots,v_k,0\ldots)\in\B R^\infty_+$. It follows from the triangle inequality and the hypothesis that
\begin{equation}\label{eq:right}
\|g_{v_1,1}\circ\ldots\circ g_{v_k,k}\|_p\leq\max\limits_{1\leq i\leq k}\mathcal{L}_p\{g_{t,i}\}\sum_{i=1}^k v_i\leq M_2\sum_{i=1}^k v_i.
\end{equation}
By Theorem \ref{T:lsl} there exists a positive constant $A$ such that
\begin{equation*}
\|g_{v_1,1}\circ\ldots\circ g_{v_k,k}\|_p\geq A^{-1}\left|\widetilde{\Phi}_n(g_{v_1,1}\circ\ldots\circ g_{v_k,k})\right|.
\end{equation*}
Note that $\widetilde{\Phi}(g_{v_1,1}\circ\ldots\circ g_{v_k,k})=\sum_{i=1}^k v_i\widetilde{\Phi}(g_{v_i,i})$ because diffeomorphisms $\{g_{v_i,i}\}$ and $\{g_{v_j,j}\}$ pair-wise commute. By hypothesis the numbers $\widetilde{\Phi}(g_{1,i})$ have the same sign for all $i\in\B N$. Hence
\begin{equation}\label{eq:left}
\|g_{v_1,1}\circ\ldots\circ g_{v_k,k}\|_p\geq A^{-1}\min\limits_{1\leq i\leq k}\left|\widetilde{\Phi}(g_{1,i})\right|\sum_{i=1}^k v_i\geq
M_1\cdot A^{-1}\sum_{i=1}^k v_i.
\end{equation}
Inequalities \eqref{eq:right} and \eqref{eq:left} conclude the proof of the lemma.
\end{proof}

Let us finish the proof of the theorem by constructing a family of diffeomorphisms $\{g_{t,i}\}_{i=1}^\infty\in\Ker(\C)$ as in Lemma \ref{lem:N-inf}.
Let $\{h_s\}_{s\in\left[\frac{1}{4},\frac{1}{3}\right]}$ be a family of $C^\infty$ functions from the interval $[0,1]$ to $\B R$ such that:
\begin{itemize}
\item
the support of $h_s$ is $\left[\frac{1}{2}-s,\frac{1}{2}+s\right]$;
\item
$h_s|_{\left(\frac{1}{2}-s,\frac{1}{2}\right)}$ is positive and $h_s|_{\left(\frac{1}{2},\frac{1}{2}+s\right)}$ is negative;
\item
If $s\neq s'$ then $h_s|_{\left[\frac{3}{8},\frac{5}{8}\right]}\equiv h_{s'}|_{\left[\frac{3}{8},\frac{5}{8}\right]}$. If $s>s'$ and
$y\in \left(\frac{3}{8},\frac{1}{2}\right)$ then $h_s(y)\geq h_{s'}(y)$, if $s>s'$ and $y\in\left(\frac{1}{2},\frac{5}{8}\right)$
then $h_s(y)\leq h_{s'}(y)$;
\item
For each $s\in\left[\frac{1}{4},\frac{1}{3}\right]$ and $p\in\B N$ the integral $\int_0^1 h_s(y) dy=0$, the integral $\int_0^1 yh_s(y) dy<0$, and both functions $s\to \int_0^1 yh_s(y) dy$ and $s\to \int_0^1 y^{\frac{p}{2}}\left|\frac{\partial}{\partial y}h_s(y)\right|^p dy$ are continuous.
\end{itemize}
An example of such functions is shown in Figure \ref{fig:graph}.

\begin{figure}[htb]
\centerline{\includegraphics[height=2.5in]{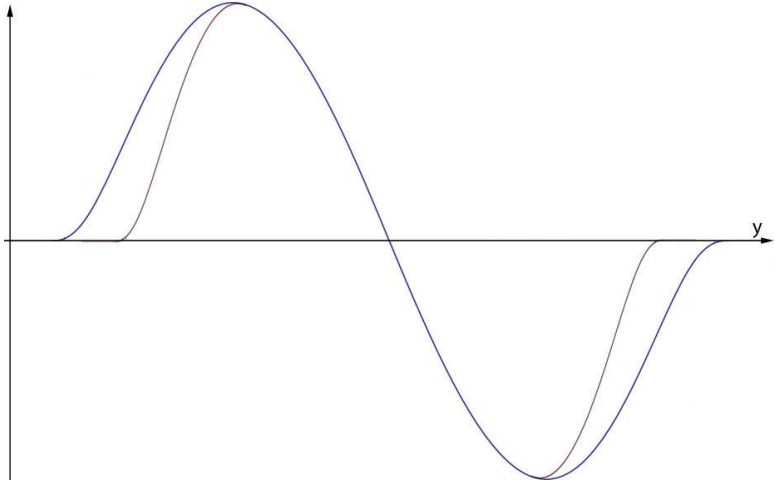}}
\caption{\label{fig:graph} Functions $h_{\frac{1}{4}}(y)$ and $h_{\frac{1}{3}}(y)$ shown in purple and blue respectively.}
\end{figure}

Let us pick an arbitrary discrete set $\{s_i\}_{i=1}^\infty$ in $\left(\frac{1}{4},\frac{1}{3}\right)$. For each $i$ we define a
function $G_i\colon\D^2\to\B R$ such that $G_i(x)=h_{s_i}(\|x\|^2)$. We denote by $g_{t,i}$ the flow generated by the Hamiltonian $G_i$. Recall that $\widetilde{\sign}_n\colon\B P_n\to\B R$ is the homogeneous quasi-morphism defined by classical signature link invariant $\sign$, and  $\widetilde{\Sign}_n$ is the induced homogeneous quasi-morphism on $\cD$. It is left to show that the family $\{g_{t,i}\}_{i=1}^\infty$ satisfies conditions of Lemma \ref{lem:N-inf} for each $t\in \B R$.

\textbf{1.} Each diffeomorphism $g_{t,i}$ lies in $\Ker\C$ because $\C(g_{t,i})=2t\int\limits_{\D^2}G_i(x) dx$ (see \cite{GG1}), and
$\int\limits_{\D^2}G_i(x) dx=\pi\int_0^1 h_{s_i}(y)dy=0$. A straightforward computation shows that in polar coordinates we have
$$g_{t,i}(r,\theta)=\left(r,\theta+2t\frac{\partial}{\partial (r^2)}h_{s_i}(r^2)\right).$$
Hence the diffeomorphisms $g_{t,i}$ and $g_{t',j}$ commute for $t,t'\in \B R$ and for $i,j\in \B N$. It follows that the map
$\B R^\infty_+\to \Ker(\C)$ defined by $(v_1,\ldots,v_k,0\ldots)\to g_{v_1,1}\circ\ldots\circ g_{v_1,k}$,
is an injective homomorphism.

\textbf{2.} Let $n=3$. There exists a positive constant $K_3$ such that
$$\widetilde{\Sign}_3(g_{t,i})=K_3\cdot t\int_0^1 (y+1)h_{s_i}(y)dy$$
for each $i$, see \cite{B,GG}. Recall that $g_{t,i}\in\Ker\C$, hence
$$\widetilde{\Sign}_3(g_{t,i})=K_3\cdot t\int_0^1 yh_{s_i}(y)dy.$$
By construction of functions $h_{s_i}$ we have $\int_0^1 yh_{s_i}(y)dy<0$ for each $i$. Hence the numbers $\left\{\widetilde{\Sign}_3(g_{1,i})\right\}_{i=1}^\infty$ have the negative sign for all $i$.

\textbf{3.} Recall that for each $s\in\left[\frac{1}{4},\frac{1}{3}\right]$ the function
$s\mapsto \int_0^1 yh_s(y) dy$ is continuous and $\int_0^1 yh_s(y) dy<0$. It follows that there exists a constant $M_1>0$ such that
$\left|\int_0^1 yh_s(y) dy\right|\geq M_1K_3^{-1}$ for each $s$. Hence
$$\inf_i\left|\widetilde{\Sign}_3(g_{1,i})\right|\geq M_1.$$

\textbf{4.} Let $p\in\B N$. For each $i$ we have
\begin{eqnarray*}
\|(g_{1,i})\|_p\leq\mathcal{L}_p\{g_{t,i}\}&=&\int_0^1 dt \left(\int_{\D}\|\dot{g}_{t,i}(x)\|^p dx \right)^{\frac{1}{p}}\\
&=&2\pi^{\frac{1}{p}}\left(\int_0^1y^{\frac{p}{2}}\left|\frac{\partial}{\partial y}h_{s_i}(y)\right|^pdy\right)^\frac{1}{p}.
\end{eqnarray*}

Recall that for each $s\in\left[\frac{1}{4},\frac{1}{3}\right]$ the function
$s\mapsto \int_0^1y^{\frac{p}{2}}\left|\frac{\partial}{\partial y}h_s(y)\right|^pdy$ is continuous. It follows that there exists a constant $M_2>0$ such that
$$\left(2^p\pi\int_0^1y^{\frac{p}{2}}\left|\frac{\partial}{\partial y}h_s(y)\right|^pdy\right)^\frac{1}{p}\leq M_2$$
for each $s$. Hence
$$\sup_i\mathcal{L}_p\{g_{t,i}\}\leq M_2.$$
This concludes the proof of the theorem.
\qed

\subsection*{Acknowledgments} The author would like to thank Remi Coulon, Dongping Zhuang and Mark Sapir for helpful conversations. We would like to thank the referee for useful comments which helped us to improve the presentation of this paper.

\bibliographystyle{alpha}

\begin{thebibliography}{99}
\addcontentsline{toc}{chapter}{\bibname}

\bibitem{Ar}
Arnol'd V.:  \textit{Sur la g\'eom\'etrie diff\'erentielle des groupes de Lie de dimension
 infinie et ses applications \`a l'hydrodynamique des fluides parfaits.}
(French)  Ann. Inst. Fourier (Grenoble)  16  1966  fasc. 1, 319--361.

\bibitem{AK} Arnol'd V., Khesin B.: \textit{Topological methods in hydrodynamics,} Applied Mathematical
Sciences 125, (1998).

\bibitem{Ba} Banyaga A.:
\textit{Sur la structure du groupe des diffeomorphismes qui preservent une forme symplectique,} (French) Comment. Math. Helv. 53 (1978), no. 2, 174--227.

\bibitem{Ba-G} Barge J., Ghys E.: \textit{Cocycles d'Euler et de Maslov}, Math. Ann. 294:2
(1992), 235--265.

\bibitem{BG} Benaim M., Gambaudo J.-M.: \textit{Metric properties of the group of area preserving diffeomorphisms,}
Trans. Amer. Math. Soc. 353 no. 11, 2001, 4661--4672.

\bibitem{BEP}
Biran P., Entov M., Polterovich L.: \textit{Calabi quasimorphisms
for the symplectic ball},  Communications in Contemporary
Mathematics, 6:5 (2004), 793--802.

\bibitem{B} Brandenbursky M.:
\textit{On quasi-morphisms from knot and braid invariants,} Journal of Knot Theory and Its Ramifications,
vol. 20, No 10 (2011), 1397--1417.

\bibitem{BK} Brandenbursky M., Kedra J.:
\textit{Quasi-isometric embeddings into diffeomorphism groups}, to appear in Groups, Geometry and Dynamics.

\bibitem{BH} Bridson M., Haefliger A.: \textit{Metric spaces of non-positive curvature,} Springer-Verlag, New York, 1999.

\bibitem{C} Calabi E.: \textit{On the group of automorphisms of a symplectic manifold,}
Problems in Analysis, symposium in honour of S. Bochner, ed. R.C.
Gunning, Princeton Univ. Press (1970), 1--26.

\bibitem{CW} Crisp J., Wiest B.: \textit{Quasi-isometrically embedded subgroups of braid and diffeomorphism groups,}
Trans. Amer. Math. Soc. 359 2007, 5485--5503.

\bibitem{Harpe} De la Harpe P.:
\textit{Topics in geometric group theory}, University of Chicago Press, 2000.

\bibitem{ER} Eliashberg Y., Ratiu T.: \textit{The diameter of the symplectomorphism group is infinite,} Invent. Math. 103 1991, 327--340.

\bibitem{EF} Epstein D., Fujiwara K.: \textit{The second bounded cohomology of word-hyperbolic groups,} Topology 36 (1997) 1275--1289.

\bibitem{FLP} Fathi A., Laudenbach F., Poenaru V.: \textit{Travaux de Thurston sur les surfaces,}
Asterisque, 66-67. Societe Mathematique de France, Paris, (1979).

\bibitem{GG} Gambaudo J.M., Ghys E.:
\textit{Commutators and diffeomorphisms of surfaces,} Ergodic Theory
Dynam. Systems 24 (2004), no. 5, 1591--1617.

\bibitem{GG1} Gambaudo J.M., Ghys E.: \textit{Enlacements asymptotiques,}
Topology 36 (1997), no. 6, 1355--1379.

\bibitem{GL} Gambaudo J.M., Lagrange M.:
\textit{Topological lower bounds on the distance between area preserving diffeomorphisms,} Bol. Soc. Brasil. Mat. 31, (2000), 1--19. CMP 2000:12.

\bibitem{Ho} Hofer H.: \textit{On the topological properties of symplectic maps,} Proc. Roy. Soc. Edinburgh Sect. A 115 (1990), no. 1-2, 25--38.

\bibitem{KW}
Khesin B.,  Wendt R.: \textit{The geometry of infinite-dimensional groups.}
Ergebnisse der Mathematik und ihrer Grenzgebiete. 3. Folge. A Series of
Modern Surveys in Mathematics [Results in Mathematics and Related Areas. 3rd
Series. A Series of Modern Surveys in Mathematics], 51. Springer-Verlag, Berlin,  2009.

\bibitem{LM} Lalonde F., McDuff D.: \textit{The geometry of symplectic energy,} Ann. of Math. (2) 141 (1995), no. 2, 349--371.

\bibitem{L} Lickorish W. B. R.:
\textit{An Introduction to Knot Theory.} 1997 Springer-Verlag New York, Inc.

\bibitem{Mil} Milnor J.: \textit{A Note on Curvature and Fundamental Group,} J. Differential. Geometry 2, 1968.

\bibitem{McDuff} McDuff D.:\textit{ A survey of topological properties of groups of symplectomorphisms,} in
Topology, Geometry and Quantum Field Theory, U.L. Tillmann, (ed.) (Proceedings
of 2002 Symposium in honor of G.B. Segal), Cambridge University Press, 2004, 173--193.

\bibitem{Polt} Polterovich L: \textit{Floer homology, dynamics and groups,} in Morse
theoretic methods in nonlinear analysis and in symplectic topology,
417--438, Springer 2006.

\bibitem{Py} Py P.: \textit{Quelques plats pour la métrique de Hofer,} J. Reine Angew. Math. 620 (2008), 185--193.

\bibitem{Sh} Shnirelman A.:  \textit{Generalized fluid flows, their approximation and applications}.
 Geom. Funct. Anal.  4  (1994),  no. 5, 586--620.

\bibitem{T} Tsuboi T.: \textit{The Calabi invariant and the Euler class,} Trans. Amer. Math.
Soc. 352, No. 2, 2000, 515--524.

\bibitem{U} Usher M.: \textit{Hofer's metric and boundary depth}, ArXiv:11074599, 2011.

\end{thebibliography}

\vspace{3mm}

Department of Mathematics, Vanderbilt University, Nashville, TN 37240\\
\emph{E-mail address:} \verb"michael.brandenbursky@vanderbilt.edu"

\end{document}